\newcommand{\norm}[1]{\left\Vert#1\right\Vert}
\newcommand{\abs}[1]{\left\vert#1\right\vert}
\newcommand{\set}[1]{\ensuremath{ \{ #1 \} }}
\newcommand{\R}{\mathbb{R}}
\newcommand{\N}{\mathbb{N}}
\newcommand*{\cadlag}{c\`adl\`ag}
\newcommand{\brak}[1]{\left(#1\right)}    
\newcommand{\crl}[1]{\left\{#1\right\}}   
\newcommand{\edg}[1]{\left[#1\right]}     
\DeclareMathOperator*{\argmin}{arg\,min}
\begin{document}
\title{Minimal Supersolutions of Convex BSDEs under Constraints}
\author[a,1,s]{Gregor Heyne}
\author[b,2]{Michael Kupper}
\author[a,3]{Christoph Mainberger}
\author[c,4]{Ludovic Tangpi}

\address[a]{Humboldt-Universit\"at zu Berlin, Unter den Linden 6, 10099 Berlin, Germany}
\address[b]{University of Konstanz, Universit\"atsstr.~10, 78457 Konstanz, Germany}
\address[c]{University of Vienna, Faculty of Mathematics, Oskar-Morgenstern-Platz 1, A-1090}

\eMail[1]{heyne@math.hu-berlin.de}
\eMail[2]{kupper@uni-konstanz.de}
\eMail[3]{mainberg@math.hu-berlin.de}
\eMail[4]{ludovic.tangpi@univie.ac.at}

\myThanks[s]{Financial support: MATHEON project E.2}

\abstract{
We study supersolutions of a backward stochastic differential equation, the control processes of which 
are constrained to be continuous semimartingales of the form  $dZ= \Delta dt + \varGamma dW$.%
~The generator may depend on the decomposition $(\Delta,\varGamma)$ and is assumed to be positive, jointly
convex and lower semicontinuous, and to satisfy a superquadratic growth condition in $\Delta$ and $\varGamma$.%
~We prove the existence of a supersolution that is minimal at time zero
and derive stability properties of the non-linear operator that maps terminal
conditions to the time zero value of this minimal supersolution such as monotone convergence, Fatou's lemma
and $L^1$-lower semicontinuity.
~Furthermore, we provide duality results within the present framework 
and thereby give conditions 
for the existence of solutions under constraints.
}
\keyWords{Supersolutions of Backward Stochastic Differential Equations; Gamma Constraints; Minimality under Constraints; Duality}
\keyAMSClassification{60H20; 60H30}

\maketitle

\section{Introduction}
\addcontentsline{toc}{section}{Introduction}
\markboth{\uppercase{Introduction}}{\uppercase{Introduction}}

On a filtered probability space, the filtration of which is generated by a $d$-dimensional Brownian
motion, we are interested in quadruplets $(Y,Z,\Delta,\varGamma)$ of processes
such that, for all $0\le s\le t\le T$, the system
\begin{align}\label{eq_intro}
 Y_s& - \int_s^t g_u(Y_u,Z_u,\Delta_u,\varGamma_u)du + \int_s^t Z_u dW_u \ge Y_t\,, \qquad Y_T \ge \xi\,, \nonumber\\
 Z_t& = z + \int_0^t\Delta_udu + \int_0^t\varGamma_udW_u 
\end{align}
is satisfied.
Here, for $\xi$ a terminal condition, $Y$ is the \cadlag\, value process 
and $Z$ the continuous control process with decomposition $(\Delta,\varGamma)$.  
The generator $g$ is assumed to be jointly convex and may depend on the decomposition of the continuous semimartingale $Z$.
It is our objective to give conditions ensuring that the set $\mathcal A(\xi,g,z)$, consisting of 
all admissible pairs $(Y,Z)$ satisfying \eqref{eq_intro}, henceforth called supersolution of the backward stochastic
differential equation (BSDE) under gamma and delta constraints, 
contains elements $(\hat Y,\hat Z)$ that are minimal at time zero. 
Furthermore, we give conditions relying on BSDE duality for the existence of solutions under constraints.\\

Finding the minimal initial value of a supersolution under constraints 
is closely related to
the superreplication problem in a financial market under gamma constraints, first studied in \citet{SonTouz02}.
Indeed, the classical gamma constraints can be incorporated into our more general framework by
setting the generator to $+\infty$ whenever the diffusion part $\varGamma$ 
is outside a predetermined interval.
In \citet{SonTouz02}, the decomposition parts of the trading strategies are assumed to be bounded and under similar assumptions, \citet{CherSonTouz} focus on the multidimensional case. 
In both papers the problem is formulated in a dynamic setting, allowing the authors to use dynamic programming tools to identify the value functions as unique viscosity solutions of parabolic partial differential equations.
The present paper in contrast focuses on the static case and studies the problem using purely probabilistic tools.
Moreover, instead of a priori bounding the components of the control process, we opted for incorporating a growth condition on the generator which in turn ensures that our controls belong to suitable spaces.
Let us briefly outline the idea behind our approach.
In a nutshell, inspired by the methods first used in \citet{CSTP} and then later in \citet{HKM}, we begin by
considering the operator $\mathcal E_0^g(\xi,z):=\inf \{Y_0\,:\, (Y,Z) \in \mathcal A(\xi,g,z)\}$ 
where $z\in\R^{1\times d}$ is the initial value of controls. 
We then show that 
the set of supersolutions $(\hat Y,\hat Z)$ satisfying $\hat Y_0=\mathcal E_0^g(\xi,z)$ is non-empty.
In order to do so, we impose  a superquadratic growth condition in the decomposition parts $(\Delta,\varGamma)$ of controls on the generator $g$, 
reflecting a penalization of rapid changes in control values and accounting for the expression ``Delta- and Gamma-Constraints''.
The consequence is twofold.
First, it ensures that the sequence of stochastic integrals $(\int Z^ndW)$ corresponding to the minimizing sequence 
$Y^n_0\downarrow\mathcal E_0^g(\xi,z)$ is bounded in $\mathcal H^2$.
Drawing from compactness results for the space of martingales $\mathcal H^2$ given in \citet{DelbSchach01}, 
we obtain our candidate control process $\hat Z$
as the limit of a sequence in the asymptotic convex hull of $(Z^n)$.
At this point it is crucial to preserve the continuous semimartingale structure of the limit object,
that is $\hat Z = z + \int \hat\Delta du + \int \hat\varGamma dW$.
Here the first novelty of this paper comes into play since, although following the ideas used in \citet{CSTP},
it is a priori not clear that the candidate control has the right structure.
To achieve this, we prove two auxiliary results by using once more the aforementioned growth condition on $g$.

In a next step, we provide stability results of $\xi\mapsto\mathcal E^g_0(\xi,z)$, the non-linear operator that maps a terminal
condition to the value of the minimal supersolution at time zero, such as monotone convergence,
Fatou's lemma or $L^1$-lower semicontinuity.
This, together with convexity, gives way to a dual representation of $\mathcal E^g_0$ as a consequence of the Fenchel-Moreau theorem,
which is the second main novel contribution of this work.
Indeed, we use purely probabilistic methods in order to characterize the conjugate $\mathcal E^*_0$ in terms of the decomposition parts of the controls and show that $\mathcal E^*_0$ is 
always attained.
Note that, in contrast to \citet{DrapeauTangpi}, in the presence of constraints identifying the convex conjugate $\mathcal E^*_0$ is technically more involved.
In particular, for the case of a quadratic generator we show that it is possible to explicitly compute the conjugate
by means of classical calculus of variations methods, giving additional structural insight into the problem.
If we assume in turn the existence of an optimal subgradient such that $\mathcal E^g_0(\xi,z)$ is attained in its dual representation, we can
prove that the associated BSDE with parameters $(\xi,g)$ admits a solution under constraints.
Our duality results extend those of \citet{DelbaenHu} and \citet{DrapeauTangpi} obtained in the unconstrained case
as the existence of constraints require new methods in order to characterize the convex conjugate.\\

Before we continue, let us briefly discuss the existing literature on the subject.
Ever since the seminal paper \citet{peng01}, an extensive amount of work has been done in the field of BSDEs,
resulting in such important contributions as for instance \citet{karoui01}, \citet{kobylanski01} or \citet{briand03}.
We refer the reader to \citet{peng99} or \citet{CSTP} for a more thorough treatment of the literature concerning
solutions and in particular supersolutions of BSDEs.
There are many works dealing with optimization or (super-)replication under constraints,
see for instance \citet{CvitanicKarat01}, \citet{JouiniKallal01} or \citet{BroadieCvitanic01} and references therein,
but the notion of gamma constraints in the context of superhedging was introduced in \citet{SonTouz02} and then studied in a multi-dimensional setting in \citet{CherSonTouz}.
We would also like to refer the reader to \citet{CherSonTouz02}, where the authors treat the related system of
BSDEs and SDEs in a more abstract fashion, whereas the more recent work \citet{SonTouzZhang} provides a dual characterization of the superreplication problem.\\

The remainder of this paper is organized as follows. 
Setting and notations are specified in Section \ref{sec01}. 
A precise definition of supersolutions under gamma and delta constraints is then given 
in Section \ref{sec02}, along with existence and stability results. 
We conclude this work with duality results 
in Section \ref{sec023}.

\section{Setting and notations}\label{sec01}
We consider a filtered probability space
$(\Omega,\mathcal F, (\mathcal F_t)_{t\ge 0},P)$,
where the filtration $(\mathcal F_t)$ is generated by a
$d$-dimensional Brownian motion $W$ and is assumed to satisfy the usual conditions.%
~For some fixed time horizon $T>0$ 
the set of $\mathcal F_T$-measurable random
variables is denoted by $L^0$, where random variables
are identified in the $P$-almost sure sense.
By $L^p$ we furthermore denote the set of random variables in $L^0$ 
with finite $p$-norm,
for $p \in [ 1,+\infty]$.
Inequalities and strict inequalities between any two
random variables or processes $X^1$ and $X^2$ are understood in the $P$-almost sure or
in the $P\otimes dt$-almost everywhere sense, respectively.
We denote by $\mathcal{T}$ the set of stopping times with values in $[0,T]$ 
and hereby call an increasing sequence of stopping times $(\tau^n)$ such that
$P[\bigcup_{n}\set{\tau^n=T}]=1$ a localizing sequence of stopping
times.
For $m,n\in\N$ we denote by $|\cdot|$ the Euclidean norm on $\R^{m\times n}$, that is $|x|=(\sum_{i,j}x^2_{ij})^{\frac{1}{2}}$.
By $\mathcal{S}:=\mathcal{S}(\R)$ we denote the set of 
\cadlag\, progressively measurable processes $Y$ with values in $\R$.
For $p \in \left[ 1,+\infty \right[$, we further denote by
$\mathcal{H}^p$ the set of \cadlag\, local martingales $M$ with finite $\mathcal H^p$-norm on $[0,T]$, that is 
$\norm{M}_{\mathcal H^p} := E[\langle M,M \rangle_T^{p/2}]^{1/p} <\infty$.
By $\mathcal{L}^p:=\mathcal{L}^p\left( W \right)$ we denote the set of $\R^{1\times d}$-valued, progressively
measurable processes $Z$ such that $\int ZdW \in \mathcal H^p$, that is,
$\norm{Z}_{\mathcal{L}^p}:=E[(\int_{0}^{T}\abs{Z_s}^2 ds)^{p/2}]^{1/p}$ is finite.
For $Z \in \mathcal{L}^p$, the stochastic integral $\int Z dW$ is well defined, see \citep{protter}, and is by means
of the Burkholder-Davis-Gundy inequality \citep[Theorem 48]{protter} 
a continuous martingale.
We further denote by $\mathcal{L}:=\mathcal{L}\left( W \right)$ the set of
$\R^{1\times d}$-valued, progressively measurable processes $Z$ such that
there exists a localizing sequence of stopping times $(\tau^n)$ with
$Z1_{\left[0,\tau^n  \right]} \in \mathcal{L}^1$, for all $n\in\N$.
For $Z\in \mathcal L$, the stochastic integral $\int_{}^{}ZdW$ is well defined and is a
continuous local martingale.
Finally, for a given sequence $(x_n)$ in some convex set, we say that a sequence $(\tilde x_n)$ is in the asymptotic
convex hull of $(x_n)$ if $\tilde x_n \in conv\{x_n,x_{n+1},\dots\}$, for all $n\in\N$.\\

\section{Minimal supersolutions of BSDEs under delta and gamma constraints}\label{sec02}
\subsection{Definitions}\label{sec21}
Throughout this work, a generator 
is a jointly measurable function $g$ from $\Omega\times[0,T]\times \R\times
\R^{1\times d} \times \R^{1\times d} \times \R^{d\times d}$ to $\R\cup\{+\infty\}$
where $\Omega\times[0,T]$ is endowed with the progressive $\sigma$-field.
A control $Z\in\mathcal L$ with initial value $z\in\R^{1\times d}$ is said to have the 
decomposition $(\Delta,\varGamma)$ if it is of the form $Z=z+\int \Delta du + \int \varGamma dW$, for 
progressively measurable $(\Delta,\varGamma)$ taking values in $\R^{1\times d}\times\R^{d\times d}$.\footnote{
In order to be compatible with the dimension of $Z$, actually the transpose $(\int \varGamma dW)^T$
of $\int\varGamma dW$ needs to be considered.
However, we suppress this operation for the remainder in order to keep the notation simple.}
A control is said to be admissible if the continuous local
martingale $\int Z dW$ is a supermartingale.
Let us collect all these processes in the set $\Theta$ defined by
\begin{equation*}
\Theta:= \left\{ Z\in \mathcal L\,: \begin{array}{l}   \mbox{there exists $z\in\R^{1\times d}$ and progressively measurable $(\Delta,\varGamma)$  such that}
\enspace \\ \mbox{$Z = z + \int\Delta du + \int\varGamma dW$ and $\int ZdW$ is a supermartingale}
\end{array} \right\} \,.
\end{equation*}
Whenever we want to stress the dependence of controls on a fixed initial value $z\in\R^{1\times d}$, we make use of 
the set $\Theta(z):=\{Z\in\Theta:Z_0=z\}$. 
Given a generator $g$ and a
terminal condition $\xi\in L^0$, a pair $(Y,Z)\in{\cal S}
\times\Theta$ is a supersolution of a BSDE under gamma and delta constraints if, for $0\leq s\leq
t\leq T$, it holds
\begin{equation}\label{eq03}
Y_s - \int_s^t g_u(Y_u,Z_u,\Delta_u,\varGamma_u)du + \int_s^t Z_u dW_u \ge Y_t \quad\text{and}\quad Y_T \ge
\xi\,\,.
\end{equation}
For a supersolution $(Y,Z)$, we call $Y$ the value process and $Z$ its
corresponding control process.\footnote{Note that the formulation in \eqref{eq03} is equivalent to the existence of a
\cadlag\,increasing process $K$, with $K_{0}=0$, such that
$	Y_{t}=\xi + \int_t^T g_u(Y_u,Z_u,\Delta_u,\varGamma_u)du+(K_{T}-K_{t})-\int_t^T
Z_{u}dW_{u}$ for all $t\in[0,T]$, see for example \cite{karoui01, peng99}.}
Given $z\in\R^{1\times d}$, we are now interested in the set
\begin{equation*}
\mathcal A(\xi,g,z) := \crl{(Y,Z) \in\mathcal S\times \Theta(z)\,: \,\mbox{\eqref{eq03} holds}}\,.
\end{equation*}
Throughout this paper a generator $g$ is said to be 
\begin{enumerate}[label=\textsc{(lsc)},leftmargin=40pt]
 \item  if  $(y,z,\delta,\gamma)\mapsto g(y,z,\delta,\gamma)$ is lower semicontinuous.\label{lsc} 
\end{enumerate}
\begin{enumerate}[label=\textsc{(pos)},leftmargin=40pt]
 \item positive, if  $g(y,z,\delta,\gamma)\ge 0$, for all $(y,z,\delta,\gamma) \in \R\times\R^{1\times d}\times\R^{1\times d}\times \R^{d\times d}$.\label{pos}
\end{enumerate}
\begin{enumerate}[label=\textsc{(con)},leftmargin=40pt]
 \item convex, if $(y,z,\delta, \gamma)\mapsto g(y,z,\delta,\gamma)$ is jointly convex.\label{con}
 \end{enumerate}
\begin{enumerate}[label=\textsc{(dgc)},leftmargin=40pt]
 \item delta- and gamma-compatible, if there exist $c_1\in\R$ and $c_2>0$ such that, for all $(\delta,\gamma)\in\R^{1\times d}\times \R^{d\times d}$,
\[g(y,z,\delta,\gamma) \ge c_1 + c_2\brak{|\delta|^2 + |\gamma|^2}\]
holds for all $(y,z)\in \R\times\R^{1\times d}$.\label{dgc}
\end{enumerate}
\begin{remark}\text{}\
\begin{enumerate}
\item[(i)] Note that \ref{dgc} reflects a penalization 
of rapid changes in control values. 
In contrast to \cite{CherSonTouz} or \cite{CherSonTouz02}, where the single decomposition parts $\Delta$ and $\varGamma$ were 
demanded to satisfy certain boundedness, continuity or growth properties, we embed this 
in $\ref{dgc}$ so that suitable $\mathcal L^2$-bounds emerge naturally from the problem \eqref{eq03}.
\item[(ii)] An example of a generator that excludes values of $\varGamma$
exceeding a certain level by penalization and fits into our setting is given by
\[
g(y,z,\delta,\gamma) = \left\{ \begin{array}{ll}
            \tilde g(y,z,\delta)\enspace\,\,&\text{if}\enspace |\gamma|\le M\\ 
            +\infty\enspace&\text{else}
           \end{array} \right.\,,
\]
where $M>0$ and $\tilde g$ 
is any positive, jointly convex and lower semicontinuous generator satisfying 
$\tilde g(y,z,\delta) \ge c_1 +c_2 |\delta|^2 $ for constants $c_1\in\R$ and $c_2>0$.  
This particular choice of $g$ is closely related to the kind of gamma constraints studied in \cite{CherSonTouz}.
\item[(iii)]
Setting the generator $g(\cdot,z,\cdot,\cdot)$ equal to $+\infty$ outside a desired subset of $\R^{1\times d}$
shows for instance that our framework is flexible enough to comprise shortselling constraints.
\end{enumerate} 
\end{remark}

\subsection{General properties}

The proof of the ensuing Lemma \ref{lemma1} can be found in \citep[Lemma 3.2]{CSTP}.
\begin{lemma}\label{lemma1}
Let $g$ be a generator satisfying \ref{pos}.
Assume further that $\mathcal A(\xi,g)\neq\emptyset$ and that for the terminal condition
$\xi$ holds $\xi^-\in L^1$. 
Then $\xi\in L^1$ and, 
for any $(Y,Z)\in\mathcal A(\xi,g)$, the control $Z$ is unique and the value process $Y$ is a
supermartingale 
such that $Y_t\ge E[\xi|\mathcal F_t]$.
Moreover, the unique canonical decomposition of $Y$ is given by
\begin{equation}\label{eq21}
Y = Y_0 + M - A\,,
\end{equation}
where $M= \int ZdW$ and $A$ is an increasing, predictable, \cadlag\, process
with  $A_0=0$.
\end{lemma}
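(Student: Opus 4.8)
The plan is to read every claim out of the defining inequality \eqref{eq03}, using only positivity \ref{pos} and admissibility of the control (so that $M:=\int Z\,dW$ is a continuous local martingale \emph{and} a supermartingale), and to invoke the Brownian nature of the filtration at the very end. First I would note that \ref{pos} reduces \eqref{eq03} to the pathwise inequality $Y_t-M_t\le Y_s-M_s$ for $s\le t$, i.e.\ that $N:=Y-M$ is non-increasing. This yields at once the two-sided bound $\xi-M_T+M_t\le Y_t\le Y_0+M_t$, the lower bound coming from $N_t\ge N_T\ge\xi-M_T$ together with $Y_T\ge\xi$, and the upper one from $N_t\le N_0=Y_0$.

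Next I would settle integrability in the right order. Since $M$ is a supermartingale, $M_t\in L^1$ for each $t$ and $E[M_t-M_s\mid\mathcal F_s]\le 0$; moreover $Y_0$ is a deterministic constant because $\mathcal F_0$ is trivial. Combining this with the sandwich above and $\xi^-\in L^1$ gives $Y_t^+\le|Y_0|+M_t^+$ and $Y_t^-\le\xi^-+M_T^++M_t^-$, whence $Y_t\in L^1$. Taking $s=0,\ t=T$ in the upper bound and using $\xi\le Y_T$ gives $\xi^+\le|Y_0|+M_T^+\in L^1$, so $\xi\in L^1$. Conditioning the upper bound $Y_t\le Y_s+(M_t-M_s)$ on $\mathcal F_s$ then shows $E[Y_t\mid\mathcal F_s]\le Y_s$, i.e.\ $Y$ is a supermartingale, and consequently $Y_t\ge E[Y_T\mid\mathcal F_t]\ge E[\xi\mid\mathcal F_t]$.

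For the decomposition \eqref{eq21} I would set $A:=Y_0+M-Y$, which by the first step is non-decreasing, \cadlag, with $A_0=0$, so that indeed $Y=Y_0+M-A$. It remains to check that this is the \emph{canonical} decomposition, i.e.\ that $A$ is predictable and $M$ is the martingale part. By Doob--Meyer the supermartingale $Y$ admits a canonical decomposition $Y=Y_0+\bar M-\bar A$ with $\bar M$ a local martingale and $\bar A$ predictable increasing; since the filtration is generated by $W$, every local martingale is continuous, so $\bar M$ is continuous. Then $M-\bar M=A-\bar A$ is simultaneously a continuous local martingale and a finite-variation process null at zero, hence vanishes identically. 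Therefore $\bar M=M=\int Z\,dW$ and $A=\bar A$ is predictable, which is exactly \eqref{eq21}. Uniqueness of the control is then immediate: $\int Z\,dW$ is the continuous martingale part of $Y$ and thus determined by $Y$ alone, so two supersolutions sharing the same value process satisfy $\int(Z-Z')\,dW=0$ and hence $Z=Z'$ in $\mathcal L$.

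The step I expect to be the main obstacle is precisely this identification of the martingale part: ruling out a purely discontinuous martingale component in the canonical decomposition so that the increasing process is forced to be predictable and the local martingale part is \emph{exactly} $\int Z\,dW$ rather than merely sharing its continuous part. The decisive ingredient there is the continuity of all local martingales in the Brownian filtration. A secondary point of care, easy to get in the wrong order, is the integrability bootstrap: one must establish $Y_t\in L^1$ from the sandwich bound before one is entitled to assert the supermartingale inequality, and only then derive $\xi\in L^1$ and $Y_t\ge E[\xi\mid\mathcal F_t]$.
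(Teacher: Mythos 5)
Your proof is correct and takes essentially the same route as the proof the paper defers to, namely \citep[Lemma 3.2]{CSTP}: positivity of $g$ plus the supermartingale property of $\int Z\,dW$ give the monotonicity of $Y-\int Z\,dW$, the integrability bootstrap and the supermartingale property of $Y$, and the identification $M=\int Z\,dW$, $A$ predictable rests, exactly as there, on the Doob--Meyer decomposition combined with the fact that in a Brownian filtration every local martingale is continuous, so that the difference of the two candidate martingale parts is a continuous local martingale of finite variation and hence vanishes. No gaps to report.
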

The joint convexity of the generator $g$ immediately yields the following lemma.
\begin{lemma}\label{cor22}
Let $g$ be a generator satisfying \ref{con}. Then, for each $z\in\R^{1\times d}$, the set $\mathcal A(\xi,g,z)$ is convex. 
Furthermore, from $\mathcal A(\xi^1,g,z^1)\neq\emptyset$ and $\mathcal A(\xi^2,g,z^2)\neq\emptyset$ follows 
$\mathcal A(\xi^\lambda,g, z^\lambda)\neq\emptyset$, 
for $z^\lambda: = \lambda z^1 + (1-\lambda)z^2$ and $\xi^\lambda:=\lambda \xi^1 + (1-\lambda)\xi^2$
where $\lambda\in[0,1]$.
\end{lemma}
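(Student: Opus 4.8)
The plan is to prove both assertions at once, since they rest on a single computation: a convex combination of two admissible pairs is again admissible, with initial value and terminal condition combined in the same convex way. Fix $\lambda\in[0,1]$ and take $(Y^1,Z^1)\in\mathcal A(\xi^1,g,z^1)$ and $(Y^2,Z^2)\in\mathcal A(\xi^2,g,z^2)$; the first assertion is the special case $z^1=z^2=z$, $\xi^1=\xi^2=\xi$, and the second is the general case. I set $Y^\lambda:=\lambda Y^1+(1-\lambda)Y^2$, $Z^\lambda:=\lambda Z^1+(1-\lambda)Z^2$, and denote by $(\Delta^\lambda,\varGamma^\lambda):=(\lambda\Delta^1+(1-\lambda)\Delta^2,\lambda\varGamma^1+(1-\lambda)\varGamma^2)$ the corresponding decomposition. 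The goal is to show $(Y^\lambda,Z^\lambda)\in\mathcal A(\xi^\lambda,g,z^\lambda)$.

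First I would check that $Z^\lambda\in\Theta(z^\lambda)$. By linearity of the stochastic integral, $Z^\lambda=z^\lambda+\int\Delta^\lambda du+\int\varGamma^\lambda dW$ with $Z^\lambda_0=\lambda z^1+(1-\lambda)z^2=z^\lambda$, and $Z^\lambda\in\mathcal L$ by intersecting the two localizing sequences. The crucial point is admissibility: since $\int Z^\lambda dW=\lambda\int Z^1dW+(1-\lambda)\int Z^2dW$ is a convex combination \emph{with nonnegative coefficients} of two supermartingales, it is again a supermartingale. Hence $Z^\lambda\in\Theta(z^\lambda)$, and as $Y^\lambda$ is clearly a \cadlag\ progressively measurable real process we have $(Y^\lambda,Z^\lambda)\in\mathcal S\times\Theta(z^\lambda)$.

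Next I would verify the supersolution inequality \eqref{eq03}. For each $i\in\{1,2\}$ and $0\le s\le t\le T$, membership in $\mathcal A$ gives the corresponding inequality; taking $s=0,t=T$ forces $\int_0^T g_u(Y^i_u,Z^i_u,\Delta^i_u,\varGamma^i_u)\,du<\infty$, since $Y^i$ is real valued. Multiplying the two inequalities by $\lambda$ and $1-\lambda$ and adding yields
\[
Y^\lambda_s-\int_s^t\bigl[\lambda g_u(Y^1_u,Z^1_u,\Delta^1_u,\varGamma^1_u)+(1-\lambda)g_u(Y^2_u,Z^2_u,\Delta^2_u,\varGamma^2_u)\bigr]du+\int_s^t Z^\lambda_u\,dW_u\ge Y^\lambda_t .
\]
By \ref{con}, $P\otimes dt$-almost everywhere
\[
g_u(Y^\lambda_u,Z^\lambda_u,\Delta^\lambda_u,\varGamma^\lambda_u)\le\lambda g_u(Y^1_u,Z^1_u,\Delta^1_u,\varGamma^1_u)+(1-\lambda)g_u(Y^2_u,Z^2_u,\Delta^2_u,\varGamma^2_u),
\]
so replacing the bracketed integrand by the smaller quantity $g_u(Y^\lambda_u,Z^\lambda_u,\Delta^\lambda_u,\varGamma^\lambda_u)$ only increases the left-hand side and preserves the inequality. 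Together with the terminal bound $Y^\lambda_T=\lambda Y^1_T+(1-\lambda)Y^2_T\ge\lambda\xi^1+(1-\lambda)\xi^2=\xi^\lambda$, this shows $(Y^\lambda,Z^\lambda)\in\mathcal A(\xi^\lambda,g,z^\lambda)$, establishing both convexity and the nonemptiness claim.

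The argument is essentially routine, as the statement already indicates. The only points needing care are the preservation of the supermartingale property under convex combinations, which genuinely uses the nonnegativity of the weights $\lambda,1-\lambda$, and the well-definedness of the $du$-integral of $g(\cdot^\lambda)$, which is controlled from above by the finite convex combination above. I do not expect a real obstacle here; the lemma is a direct consequence of the joint convexity of $g$ and the linearity of the stochastic integral.
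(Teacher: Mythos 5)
Your proposal is correct and takes essentially the same route as the paper: the paper's proof simply asserts, as a direct consequence of \textsc{(con)}, that $\lambda(Y^1,Z^1)+(1-\lambda)(Y^2,Z^2)\in\mathcal A(\xi^\lambda,g,z^\lambda)$, which is precisely the claim you verify in detail. Your additional checks (linearity of the decomposition, preservation of the supermartingale property under nonnegative weights, and the convexity estimate on $g$ inside the integral) are exactly the routine verifications the paper leaves implicit.
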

\begin{proof}
The first assertion is a direct implication of \ref{con}.
As to the latter, it follows from \ref{con} that $\lambda(Y^1,Z^1) + (1-\lambda)(Y^2,Z^2)\in\mathcal A(\xi^\lambda,g,z^\lambda)$
whenever $(Y^1,Z^1)$ and $(Y^2,Z^2)$ belong to $\mathcal A(\xi^1,g,z^1)$ and $\mathcal A(\xi^2,g,z^2)$, respectively.
\end{proof}
For the proof of our main existence theorem we will need an auxiliary result
concerning the stability of the set $\Theta(z)$ under convergence in $\mathcal L^2$,
given that the decomposition parts can be uniformly bounded in $\mathcal L^2$.
\begin{lemma}\label{lemma21}
For any $M>0$ and $z\in\R^{1\times d}$, the set
\begin{equation*}
\Theta_M(z) = \crl{Z \in \Theta(z) \,:\, \max\left\{\norm{\Delta}_{\mathcal L^2}, \norm{\varGamma}_{\mathcal L^2}\right\} \le M }
\end{equation*}
is closed under convergence in $\mathcal L^2$.
If a sequence $(Z^n)\subset\Theta_M(z)$ with $Z^n =z+ \int \Delta^n dt + \int\varGamma^ndW$
converges in $\mathcal L^2$ to some $Z =z+ \int \Delta dt + \int\varGamma dW$, then there is a sequence $((\tilde\Delta^n,\tilde\varGamma^n))$ 
in the asymptotic convex hull of $((\Delta^n,\varGamma^n))$ converging 
in $\mathcal L^2\times\mathcal L^2$ to $(\Delta,\varGamma$).
\end{lemma}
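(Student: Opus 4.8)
The plan is to exploit the Hilbert space structure of $\mathcal L^2$ together with Mazur's lemma, and to identify limits through uniqueness of the canonical decomposition of continuous semimartingales. Both assertions follow from a single construction, so I would treat them together. I fix a sequence $(Z^n)\subset\Theta_M(z)$ with $Z^n = z + \int\Delta^n dt + \int\varGamma^n dW$ converging in $\mathcal L^2$ to $Z$. Since $\norm{\Delta^n}_{\mathcal L^2}\le M$ and $\norm{\varGamma^n}_{\mathcal L^2}\le M$, the pairs $(\Delta^n,\varGamma^n)$ form a bounded sequence in the Hilbert space $\mathcal L^2(\R^{1\times d})\times\mathcal L^2(\R^{d\times d})$ equipped with the $L^2(P\otimes dt)$ inner product. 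By reflexivity every subsequence admits a further subsequence converging weakly to some limit $(\bar\Delta,\bar\varGamma)$, and Mazur's lemma applied to such a weakly convergent subsequence produces convex combinations $(\tilde\Delta^n,\tilde\varGamma^n)\in\operatorname{conv}\{(\Delta^k,\varGamma^k):k\ge n\}$ converging strongly in $\mathcal L^2\times\mathcal L^2$ to $(\bar\Delta,\bar\varGamma)$.

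The crucial step is to identify $(\bar\Delta,\bar\varGamma)$ with the decomposition of $Z$, and I expect this to be the main obstacle, since it is what ties the merely weak convergence of the decomposition parts to the strong convergence of the controls. Here I would use that the reconstruction map $(\delta,\gamma)\mapsto z + \int\delta\, dt + \int\gamma\, dW$ is continuous: convergence $\tilde\Delta^n\to\bar\Delta$ in $\mathcal L^2$ controls the drift via Jensen's inequality, while $\tilde\varGamma^n\to\bar\varGamma$ in $\mathcal L^2$ controls the martingale part via the Burkholder-Davis-Gundy inequality. Hence the associated controls $\tilde Z^n = z + \int\tilde\Delta^n dt + \int\tilde\varGamma^n dW$ converge, uniformly in $L^2$ and thus in $\mathcal L^2$, to $z + \int\bar\Delta\, dt + \int\bar\varGamma\, dW$. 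On the other hand, convex combinations of a convergent sequence share its limit, so $\tilde Z^n\to Z$ in $\mathcal L^2$. Uniqueness of the $\mathcal L^2$-limit shows that $Z$ is the continuous semimartingale $z + \int\bar\Delta\, dt + \int\bar\varGamma\, dW$, and uniqueness of the canonical decomposition then forces $\bar\Delta=\Delta$ and $\bar\varGamma=\varGamma$ ($P\otimes dt$-a.e.), where $(\Delta,\varGamma)$ is the prescribed decomposition in the second assertion (and, for closedness, this is precisely how $Z$ acquires a decomposition in the first place).

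Since the weak limit of every subsequence is thereby pinned down to $(\Delta,\varGamma)$, the whole sequence $(\Delta^n,\varGamma^n)$ converges weakly to $(\Delta,\varGamma)$. Applying Mazur's lemma to each tail $\{(\Delta^k,\varGamma^k):k\ge n\}$, which still converges weakly to $(\Delta,\varGamma)$, I can select $(\tilde\Delta^n,\tilde\varGamma^n)$ in the asymptotic convex hull with $\norm{\tilde\Delta^n-\Delta}_{\mathcal L^2}+\norm{\tilde\varGamma^n-\varGamma}_{\mathcal L^2}<1/n$, which is the second assertion. For closedness it then remains to verify $Z\in\Theta_M(z)$: the bounds $\norm{\Delta}_{\mathcal L^2}\le M$ and $\norm{\varGamma}_{\mathcal L^2}\le M$ follow from weak lower semicontinuity of the norm (equivalently, from the preservation of the bound $M$ under convex combinations and strong convergence), while the supermartingale property of $\int Z\,dW$ passes to the limit because $Z^n\to Z$ in $\mathcal L^2$ yields $\int Z^n dW\to\int Z dW$ in $\mathcal H^2$, and the defining inequality $E[\int_0^t Z\,dW\mid\mathcal F_s]\le\int_0^s Z\,dW$ is stable under $L^2$-convergence. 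This gives $Z\in\Theta(z)$ with the required bounds and completes the argument.
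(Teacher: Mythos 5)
Your proof is correct and takes essentially the same route as the paper: boundedness of the decomposition parts yields strongly convergent convex combinations (via weak compactness and Mazur's lemma, the standard Hilbert-space fact the paper invokes implicitly), the reconstruction map $(\delta,\gamma)\mapsto z+\int\delta\,dt+\int\gamma\,dW$ is continuous by Jensen's inequality and the It\^o isometry, and uniqueness of $\mathcal L^2$-limits together with uniqueness of the canonical decomposition identifies the limit with $(\Delta,\varGamma)$. Your extra steps --- the subsequence-pinning argument giving weak convergence of the whole sequence, and the explicit verification of the bound $M$ and of the supermartingale property (which is in fact automatic, since $Z\in\mathcal L^2$ makes $\int Z\,dW$ a true martingale) --- are harmless refinements of details the paper leaves implicit.
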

\begin{proof}
First observe that for $Z\in\Theta(z)$ we have 
\begin{equation*}
\abs{Z_t}^2 \le 4\brak{\abs{z}^2 +\int_0^t\abs{\Delta_s}^2ds + \bigg|\int_0^t\varGamma_s dW_s\bigg|^2} \,.
\end{equation*}
Hence, for $Z\in\Theta_M(z)$, this in turn yields 
$E[|Z_t|^2]\le  4(|z|^2 + \Vert\Delta\Vert^2_{\mathcal L^2}
+ \Vert\varGamma\Vert^2_{\mathcal L^2})\le 4(|z|^2+2M^2) :=C < \infty$,
and hence $\Theta_M(z)$ is a bounded subset of $\mathcal L^2$, since by Fubini's theorem we obtain that 
$\Vert Z\Vert_{\mathcal L^2}\le \sqrt{TC}$.
Consider a sequence
$Z^n = z+ \int \Delta^n du + \int \varGamma^ndW$ in $\Theta_M(z)$ converging in $\mathcal L^2$ to some 
process $Z$.
Since $((\Delta^n,\varGamma^n))$  
are bounded in $\mathcal L^2\times \mathcal L^2$,
we can find a sequence $(\tilde \Delta^n,\tilde\varGamma^n) \in conv\{(\Delta^n,\varGamma^n),(\Delta^{n+1},\varGamma^{n+1}),\dots\}$ 
converging in $\mathcal L^2\times \mathcal L^2$ to some $(\Delta,\varGamma) \in \mathcal L^2\times \mathcal L^2$. 
Furthermore, it holds that $\Vert \Delta\Vert_{\mathcal L^2}\vee \Vert \varGamma\Vert_{\mathcal L^2}\le M$.
Let us denote by $(\tilde Z^n)$ 
the respective sequence in the asymptotic convex hull of $(Z^n)$.
From Jensen's inequality we deduce that  
\begin{equation*} 
E\edg{\int_0^T\bigg|\int_0^t(\tilde \Delta^n_s - \Delta_s)ds\bigg|^2dt}\le T E\edg{\int_0^T\abs{\tilde \Delta^n_s - \Delta_s}^2ds} \to 0\,,
\end{equation*} 
and thus $(\int\tilde \Delta^nds)$ converges to $\int\Delta ds$ in $\mathcal L^2$.
Applying Fubini's theorem and using the It\^o isometry yield that
\begin{equation*} 
E\edg{\int_0^T \bigg|\int_0^t \tilde\varGamma^n_sdW_s - \int_0^t\varGamma_s dW_s\bigg|^2 dt} \le T E\edg{\int_0^T\abs{\tilde\varGamma^n_s - \varGamma_s}^2ds}\,,
\end{equation*} 
where the term on the right-hand side tends to zero by means of the $\mathcal L^2$-convergence of $(\tilde\varGamma^n)$ to $\varGamma$.
Hence, ($\int\tilde\varGamma^ndW$) converges to $\int\varGamma dW$ in $\mathcal L^2$.
$(\tilde Z^n)$ inheriting the $\mathcal L^2$-convergence to $Z$ from $(Z^n)$ together with 
the $P\otimes dt$-uniqueness of $\mathcal L^2$-limits finally allows us to write the process $Z$ as $Z =z+ \int \Delta ds + \int \varGamma dW$,
we are done.
\end{proof} 
Lemma \ref{lemma21} yields the following compactness result.
\begin{lemma}\label{L2compact}
Assume that $\mathcal A(\xi,g,z)$ is non-empty for some $z\in\R^{1\times d}$.
Let $\xi^-$ be in $L^1$ and $g$ satisfy 
\ref{pos}, \ref{con} and \ref{dgc}.
Then, for any sequence $((Y^n,Z^n))\subset \mathcal A(\xi,g,z)$ of supersolutions satisfying $\sup_nY^n_0< \infty$, 
the following holds:
There is a sequence $(\tilde Z^n)$ in the asymptotic convex hull of $(Z^n)$ that converges in $\mathcal L^2$ to some 
process $\hat Z \in \Theta(z)$.
\end{lemma}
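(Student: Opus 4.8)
The plan is to extract from the supersolution property a uniform $\mathcal L^2$-bound on the decomposition parts $(\Delta^n,\varGamma^n)$, so that the whole sequence lands in a set of the type $\Theta_M(z)$, and then to exploit the Hilbert space structure of $\mathcal L^2$ together with the closedness result of Lemma \ref{lemma21}. First, since $\mathcal A(\xi,g,z)\neq\emptyset$ and $\xi^-\in L^1$, Lemma \ref{lemma1} already gives $\xi\in L^1$. Fixing $n$ and evaluating \eqref{eq03} at $s=0$, $t=T$, then using $Y^n_T\ge\xi$, I would obtain the pathwise estimate
\[ \int_0^T g_u(Y^n_u,Z^n_u,\Delta^n_u,\varGamma^n_u)\,du \le Y^n_0 - \xi + \int_0^T Z^n_u\,dW_u. \]
By \ref{pos} the left-hand side is a nonnegative random variable, so its expectation is well defined in $[0,\infty]$. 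Admissibility means $\int Z^n\,dW$ is a supermartingale started at $0$, whence $E[\int_0^T Z^n_u\,dW_u]\le 0$; taking expectations and recalling that $Y^n_0$ is ($\mathcal F_0$-trivially) deterministic yields $E[\int_0^T g_u\,du]\le Y^n_0 - E[\xi]<\infty$.

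Next I would feed in the growth condition \ref{dgc}. Integrating $g\ge c_1+c_2(|\delta|^2+|\gamma|^2)$ over $[0,T]\times\Omega$ and combining with the previous bound gives
\[ c_1 T + c_2\big(\norm{\Delta^n}_{\mathcal L^2}^2 + \norm{\varGamma^n}_{\mathcal L^2}^2\big) \le Y^n_0 - E[\xi]. \]
Since $\sup_n Y^n_0<\infty$ and $E[\xi]$ is finite, the right-hand side is bounded uniformly in $n$, so there is $M>0$ with $\norm{\Delta^n}_{\mathcal L^2}\vee\norm{\varGamma^n}_{\mathcal L^2}\le M$ for every $n$; that is, $(Z^n)\subset\Theta_M(z)$.

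For the compactness step I would use that, as shown in the proof of Lemma \ref{lemma21}, $\Theta_M(z)$ is a bounded subset of the Hilbert space $\mathcal L^2$. A bounded sequence therefore admits a weakly convergent subsequence $Z^{n_k}\rightharpoonup\hat Z$. Because the norm-closed convex hull of each tail $\{Z^m:m\ge n\}$ coincides with its weak closure and contains $\hat Z$, I can choose $\tilde Z^n\in conv\{Z^n,Z^{n+1},\dots\}$ with $\norm{\tilde Z^n-\hat Z}_{\mathcal L^2}\to 0$, i.e.\ a sequence in the asymptotic convex hull of $(Z^n)$ converging in $\mathcal L^2$ to $\hat Z$. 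As $\Theta_M(z)$ is convex (the decomposition of a convex combination is the convex combination of the decompositions, a convex combination of supermartingales is a supermartingale, and the norm bound is preserved by convexity of $\norm{\cdot}_{\mathcal L^2}$), each $\tilde Z^n$ lies in $\Theta_M(z)$, and Lemma \ref{lemma21} (closedness under $\mathcal L^2$-convergence) delivers $\hat Z\in\Theta_M(z)\subset\Theta(z)$.

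The main obstacle I anticipate is the first part: turning the pathwise supersolution inequality into a genuine uniform $\mathcal L^2$-bound. This hinges on correctly exploiting admissibility, since it is the supermartingale property that fixes the sign of $E[\int_0^T Z^n\,dW]$ and prevents the unbounded stochastic-integral term from spoiling the estimate, and on the superquadratic lower bound \ref{dgc} being exactly strong enough to dominate the decomposition norms via the bound on $\sup_n Y^n_0$. Once the sequence is trapped in $\Theta_M(z)$, the remainder is the standard combination of Hilbert space weak compactness and Mazur's lemma, with Lemma \ref{lemma21} supplying the stability of the limit.
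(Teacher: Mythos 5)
Your proposal is correct and follows essentially the same route as the paper: bound $E[\int_0^T g\,du]$ uniformly via the supersolution inequality, positivity of $g$ and the supermartingale property of $\int Z^n dW$, feed this into \ref{dgc} to trap $(Z^n)$ in $\Theta_M(z)$, then extract $\mathcal L^2$-convergent convex combinations (weak compactness plus Mazur) and invoke Lemma \ref{lemma21} to identify the limit as an element of $\Theta(z)$. The only cosmetic differences are that you bound by $Y^n_0 - E[\xi]$ (using $\xi\in L^1$ from Lemma \ref{lemma1}) where the paper uses $Y^n_0 + E[\xi^-]$, and that you spell out the weak-compactness/Mazur step that the paper leaves implicit.
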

\begin{proof}
{\it Step 1: Existence of $((\tilde Y^n,\tilde Z^n))$.~$\mathcal L^2$-convergence of $(\tilde Z^n)$ to $\hat Z$.} 
First observe that \eqref{eq03} and the supermartingale property of all $\int Z^n dW$ imply that
\begin{equation}\label{L2C01}
E\edg{\int_0^T g_t(Y^n_t,Z^n_t,\Delta^n_t,\varGamma^n_t) dt}\le Y^n_0 + E\edg{\xi^-} \le C + E\edg{\xi^-}<\infty\,,
\end{equation}
where we put $C:=\sup_n Y^n_0$. 
Now, using \eqref{L2C01} together with \ref{dgc} we estimate
\begin{multline*}
\norm{\Delta^n}^2_{\mathcal L^2} + \norm{\varGamma^n}^2_{\mathcal L^2} = E\edg{\int_0^T \abs{\Delta^n_t}^2dt} + E\edg{\int_0^T \abs{\varGamma^n_t}^2dt} \\
\le  \frac{1}{c_2}E\edg{\int_0^T g_t(Y^n_t,Z^n_t,\Delta^n_t,\varGamma^n_t) dt} - \frac{c_1}{c_2}T 
\le \frac{1}{c_2}\brak{ C + E\edg{\xi^-} -  c_1T} <\infty\,.
\end{multline*}
Since the right-hand above is independent of $n$, 
we obtain that
$(Z^n)\subset \Theta_M(z)$ with $M:=[\frac{1}{c_2}(C + E\edg{\xi^-} -  c_1T)]^{\frac{1}{2}}$ and 
the arguments within the proof of Lemma \ref{lemma21} show that 
the sequence $(Z^n)$ is uniformly bounded in $\mathcal L^2$.
This in turn guarantees the existence of a sequence $(\tilde Z^n)$
in the asymptotic convex hull of $(Z^n)$ that converges to some process $\hat Z$ in $\mathcal L^2$ 
and, up to a subsequence, $P\otimes dt$-almost everywhere.\\

\noindent{\it Step 2: The process $\hat Z$ belongs to $\Theta(z)$.} 
The sequence $((\tilde Y^n,\tilde Z^n))$ lies in $\mathcal A(\xi,g,z)$, due to \ref{con}.
Moreover, the linearity of the integrals within the It\^o decompositions of $(Z^n)$ yields that
$\tilde Z^n = z + \int \tilde\Delta^n du + \int\tilde\varGamma^ndW$
where $((\tilde\Delta^n,\tilde\varGamma^n))$ denotes the corresponding convex combinations of the decomposition parts.
In addition, $((\tilde\Delta^n,\tilde\varGamma^n))$ inherits the uniform bound from $((\Delta^n,\varGamma^n))$, that is
$\max\{\sup_n\Vert\tilde\Delta^n\Vert_{\mathcal L^2}, \sup_n\Vert\tilde \varGamma^n\Vert_{\mathcal L^2}\} \le M$. 
Hence, Lemma \ref{lemma21} ensures that $\hat Z$ is of the form 
\begin{equation*}
\hat Z = z + \int\hat\Delta du + \int\hat\varGamma dW\,,
\end{equation*}
with suitable $\mathcal L^2$-convergence of the decomposition parts by possibly passing to yet another subsequence in the respective
asymptotic convex hull.
This finishes the proof.
\end{proof}

\subsection{Minimality under constraints}
Within the current setup of admissible controls constrained to follow certain dynamics,
we are interested in supersolutions $(\hat Y,\hat Z)\in \mathcal A(\xi,g,z)$ minimal at time zero, that is $\hat Y_0\le Y_0$
for all $(Y,Z)\in\mathcal A(\xi,g,z)$.
In the remainder of this work, a major role is thus played by the operator 
\begin{equation}\label{eq00}
\mathcal E^g_0(\xi,z) := \inf \crl{Y_0  \,:\, (Y,Z) \in \mathcal A(\xi,g,z)}\,,
\end{equation}
since any $(\hat Y,\hat Z)$ satisfying $\hat Y_0=\mathcal E^g_0(\xi,z)$ 
naturally exhibits the property of being minimal at time zero.
Note that the definition of a supersolution directly yields that $\mathcal A(\xi^1,g,z)\subseteq \mathcal A(\xi^2,g,z)$ whenever 
$\xi^1\ge\xi^2$.
Hence, we immediately obtain monotonicity of the operator $\mathcal E_0(\cdot,z)$,
that is $\xi^1 \ge \xi^2$ implies
$\mathcal E^g_0(\xi^1,z) \ge \mathcal E^g_0(\xi^2,z)$.
The ensuing Theorem \ref{prop1} provides existence of supersolutions minimal at time zero 
making use of the fact that the set $ \{Y_0 : (Y,Z) \in \mathcal A(\xi,g,z)\}$ is directed downwards. 
Parts of it rely on a version of Helly's theorem which we state here for the sake of completeness. 
In order to keep this work self-contained, we include the proof given in \citep[Lemma 1.25]{gregor_diss}.
\begin{lemma}\label{helly}
Let $(A^n)$ be a sequence of increasing positive processes such that the
sequence $(A^n_T)$ is bounded in $L^1$. 
Then, there is a sequence $(\tilde A^n)$ in the asymptotic
convex hull of $(A^n)$ and an increasing positive integrable process $\tilde A$
such that
\begin{equation*}
\lim_{n\to\infty} \tilde A^{n}_t = \tilde A_t\,,\quad \text{for all $t\in[0,T]$,\enspace $P$-almost surely}\,.
\end{equation*}
\end{lemma}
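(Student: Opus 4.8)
The plan is to combine a Koml\'os-type convex-combination argument with the monotone structure of the processes, mimicking the classical proof of Helly's selection theorem. Since each $A^n$ is positive and increasing, $0\le A^n_t\le A^n_T$ for every $t$, so for each fixed $t$ the sequence $(A^n_t)$ is positive and bounded in $L^1$ by $\sup_n\norm{A^n_T}_{L^1}<\infty$. The elementary input I would use repeatedly is: for any positive sequence bounded in $L^1$ there exist forward convex combinations converging $P$-almost surely to an integrable limit (Koml\'os' theorem, together with Fatou's lemma to control the limit). Fixing a countable dense set $D:=(\mathbb{Q}\cap[0,T])\cup\crl{T}=\crl{s_1,s_2,\dots}$, I would run a diagonal argument over $D$: apply the input to $(A^n_{s_1})$ to obtain forward convex combinations of the \emph{processes} converging a.s.\ at $s_1$; apply it again at $s_2$ to these new processes, noting that convex combinations of an a.s.\ convergent sequence converge to the same limit, so that convergence at $s_1$ is preserved while convergence at $s_2$ is added; iterating and passing to the diagonal produces a single sequence $(\tilde A^n)$ in the asymptotic convex hull of $(A^n)$ with $\tilde A^n_q\to\tilde A_q$ a.s.\ for every $q\in D$, where $\tilde A_q:=\lim_n\tilde A^n_q$.

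Next I would extend $\tilde A$ to all of $[0,T]$ and verify its properties. Each $\tilde A^n$, being a finite convex combination of the increasing positive $A^k$, is itself increasing and positive, so $\tilde A$ is increasing and positive on $D$ almost surely; setting $\tilde A_t:=\lim_{q\downarrow t,\,q\in D}\tilde A_q$ defines a \cadlag\ increasing positive process. Since $T\in D$, Fatou's lemma gives $E[\tilde A_T]\le\liminf_n E[\tilde A^n_T]\le\sup_nE[A^n_T]<\infty$, and as $0\le\tilde A_t\le\tilde A_T$ the whole process is integrable. For convergence at an arbitrary $t$, I would exploit monotonicity to sandwich: for $q_1\le t\le q_2$ with $q_1,q_2\in D$,
\begin{equation*}
\tilde A^n_{q_1}\le\tilde A^n_t\le\tilde A^n_{q_2},\qquad\text{hence}\qquad \tilde A_{t-}\le\liminf_n\tilde A^n_t\le\limsup_n\tilde A^n_t\le\tilde A_t.
\end{equation*}
At every continuity point of the increasing map $t\mapsto\tilde A_t$ this already forces $\tilde A^n_t\to\tilde A_t$.

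The main obstacle is precisely the remaining set of discontinuity points of the limit: although it is at most countable, it is \emph{random}, so one cannot simply enumerate it in advance and re-run the diagonal argument, and I expect this to be the delicate part. It can be handled by one further forward convex-combination extraction concentrated at the jump times of the already-constructed $\tilde A$, using that the total jump mass $\sum_{t}(\tilde A_t-\tilde A_{t-})\le\tilde A_T-\tilde A_0$ is integrable so that only countably many jumps carry appreciable mass. Alternatively, if the applications only require convergence at each fixed $t$ separately, it suffices to insert that particular $t$ into $D$ from the outset, whereupon the sandwiching above yields $\tilde A^n_t\to\tilde A_t$ a.s.\ directly.
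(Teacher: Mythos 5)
Your proposal is correct and follows essentially the same route as the paper: forward convex combinations of an $L^1$-bounded positive sequence (the Delbaen--Schachermayer lemma, a Koml\'os-type result) applied diagonally over $(\mathbb{Q}\cap[0,T])\cup\{T\}$, right-regularization of the limit along the rationals, and a monotone sandwich argument yielding almost sure convergence at every continuity point of the regularized process. The delicate step you single out --- the random set of discontinuities --- is resolved in the paper exactly along the lines of your first suggestion: the jumps of the c\`adl\`ag adapted limit process are exhausted by a countable sequence of stopping times (Karatzas--Shreve, Proposition 1.2.26), and the convex-combination lemma is then applied iteratively to the evaluations at these stopping times, followed by one further diagonalization, which preserves all previously established limits.
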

\begin{proof}
Let $(t_j)$ be a sequence running through $I := ([0,T] \cap \mathbb Q) \cup \{T\}$. 
Since $(A^n_{t_1})$ is an $L^1$-bounded sequence of positive random variables, due to \citep[Lemma A1.1]{DS94} 
there exists a sequence $(\tilde A^{1,k})$ in the asymptotic convex hull of $(A^n)$
and a random variable $\tilde A_{t_1}$ such that $(\tilde A^{1,k}_{t_1})$ converges $P$-almost surely to $\tilde A_{t_1}$.
Moreover, Fatou's lemma yields $\tilde A_{t_1} \in L^1$.
Let $(\tilde A^{2,k})$ be a sequence in the asymptotic convex hull of $(\tilde A^{1,k})$ such that
$(\tilde A^{2,k}_{t_2})$ converges $P$-almost surely to $\tilde A_{t_2}\in L^1$ and so on.
Then, for $s\in I$, it holds $\tilde A^{k,k}_s\to\tilde A_s$ on a set $\hat\Omega\subset\Omega$ satisfying $P(\hat\Omega)=1$.
The process $\tilde A$ is positive, increasing and integrable on $I$.
Thus we may define 
\begin{equation*}
\hat A_t := \lim_{r\downarrow t, r\in I} \tilde A_r\,, \quad t\in[0,T)\,,\quad \hat A_T := \tilde A_T\,.
\end{equation*}
We now show that $(\tilde A^{k,k})$, henceforth named $(\tilde A^k)$, converges $P$-almost surely on the continuity points of $\hat A$.
To this end, fix $\omega\in\hat\Omega$ and a continuity point $t\in[0,T)$ of $\hat A(\omega)$.
We show that $(\tilde A^k_t(\omega))$ is a Cauchy sequence in $\R$.
Fix $\varepsilon>0$ and set $\delta=\frac{\varepsilon}{11}$. 
Since $t$ is a continuity point of $\hat A(\omega)$, we may choose $p_1,p_2\in I$ such that $p_1<t<p_2$ and 
$\hat A_{p_1}(\omega)-\hat A_{p_2}(\omega)<\delta$.
By definition of $\hat A$, we may choose $r_1,r_2\in I$ such that $p_1<r_1<t<p_2<r_2$ and 
$|\hat A_{p_1}(\omega)-\tilde A_{r_1}(\omega)|<\delta$ and $|\hat A_{p_2}(\omega)-\tilde A_{r_2}(\omega)|<\delta$.
Now choose $N\in\N$ such that $|\tilde A^m_{r_1}(\omega)-\tilde A^n_{r_1}(\omega)|<\delta$, for all
$m,n\in\N$ with $m,n\ge N$, and $|\tilde A^j_{r_2}(\omega)-\tilde A_{r_2}(\omega)|<\delta$ and 
$|\tilde A_{r_1}(\omega)-\tilde A^j_{r_1}(\omega)|<\delta$ for $j=m,n$.
We estimate 
\begin{equation*}
|\tilde A^m_t(\omega)-\tilde A^n_t(\omega)| \le |\tilde A^m_t(\omega)-\tilde A^m_{r_1}(\omega)|+
|\tilde A^m_{r_1}(\omega)-\tilde A^n_{r_1}(\omega)|+|\tilde A^n_{r_1}(\omega)-\tilde A^n_t(\omega)| \,.
\end{equation*}
For the first and the third term on the right hand side, since $\tilde A^m$ and $\tilde A^n$ are increasing,
we deduce that $|\tilde A^m_t(\omega)-\tilde A^m_{r_1}(\omega)|\le |\tilde A^m_{r_2}(\omega)-\tilde A^m_{r_1}(\omega)|$ and 
$|\tilde A^n_t(\omega)-\tilde A^n_{r_1}(\omega)|\le |\tilde A^n_{r_2}(\omega)-\tilde A^n_{r_1}(\omega)|$.
Furthermore,
\begin{multline*}
|\tilde A^j_{r_2}(\omega)-\tilde A^j_{r_1}(\omega)| \le |\tilde A^j_{r_2}(\omega)-\tilde A_{r_2}(\omega)|
+ |\tilde A_{r_2}(\omega)-\hat A_{p_2}(\omega)| \\
+ |\hat A_{p_2}(\omega)-\hat A_{p_1}(\omega)| +  |\hat A_{p_1}(\omega)-\tilde A_{r_1}(\omega)|
+ |\tilde A_{r_1}(\omega)-\tilde A^j_{r_1}(\omega)|\,,
\end{multline*}
for $j=m,n$.
Combining the previous inequalities yields $|\tilde A^m_t(\omega)-\tilde A^n_t(\omega)|\le \varepsilon$, for all $m,n\ge N$.
Hence, $(\tilde A^k(\omega))$ converges for all continuity points $t\in[0,T)$ of $\hat A(\omega)$,
for all $\omega\in\hat\Omega$.
We denote the limit by $\tilde A$.

It remains to be shown that $(\tilde A^k)$ also converges for the discontinuity points of $\hat A$.
To this end, note that $\hat A$ is \cadlag\, and adapted to our filtration which fulfills the usual conditions.
By a well-known result, see for example \citep[Proposition 1.2.26]{karatzas01},
this implies that the jumps of $\hat A$ may be exhausted by a sequence of stopping times $(\rho^j)$.
Applying once more \citep[Lemma A1.1]{DS94} iteratively on the sequences $(\tilde A^k_{\rho^j})_{k\in\N}$, $j=1,2,3\dots$,
and diagonalizing yields the result.
\end{proof}
\begin{theorem}\label{prop1}
Assume that $\mathcal A(\xi,g,z)\neq\emptyset$ for some $\xi^-\in L^1$ and $z\in\R^{1\times d}$
and let $g$ satisfy  \ref{lsc}, \ref{pos}, \ref{con} and \ref{dgc}. 
Then, 
the set $\{(\hat Y,\hat Z) \in \mathcal A(\xi,g,z):\hat Y_0=\mathcal E_0^g(\xi,z)\}$ 
is non-empty.
\end{theorem}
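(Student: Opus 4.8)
The strategy is to combine the compactness result of Lemma \ref{L2compact} with a convergence argument for the value processes, using Helly's theorem (Lemma \ref{helly}) to extract a limit of the associated increasing processes, and finally to pass the supersolution inequality \eqref{eq03} to the limit using the lower semicontinuity \ref{lsc} and convexity \ref{con} of the generator. Concretely, first I would take a minimizing sequence $(Y^n_0)$ with $Y^n_0 \downarrow \mathcal E^g_0(\xi,z)$, realized by supersolutions $((Y^n,Z^n)) \subset \mathcal A(\xi,g,z)$; since the infimum is finite and $\xi^- \in L^1$, we have $\sup_n Y^n_0 < \infty$, so Lemma \ref{L2compact} applies. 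This yields a sequence $(\tilde Z^n)$ in the asymptotic convex hull of $(Z^n)$ converging in $\mathcal L^2$ (and, along a subsequence, $P\otimes dt$-a.e.) to some $\hat Z = z + \int \hat\Delta\,du + \int \hat\varGamma\,dW \in \Theta(z)$. Passing to the corresponding convex combinations $(\tilde Y^n, \tilde Z^n)$, which again lie in $\mathcal A(\xi,g,z)$ by \ref{con}, one preserves the property $\tilde Y^n_0 \downarrow \mathcal E^g_0(\xi,z)$ (convex combinations of numbers converging down to the infimum still converge down to it).

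Next I would use Lemma \ref{lemma1}: each $\tilde Y^n$ is a supermartingale with canonical decomposition $\tilde Y^n = \tilde Y^n_0 + \int \tilde Z^n\,dW - \tilde A^n$, where $\tilde A^n$ is increasing, predictable, \cadlag, with $\tilde A^n_0 = 0$. The generator being positive \ref{pos}, the increments of $\tilde A^n$ absorb the generator term, and one checks that $(\tilde A^n_T)$ is bounded in $L^1$ (this follows from $\tilde Y^n_0 \ge E[\tilde A^n_T] + E[\tilde Y^n_T] \ge E[\tilde A^n_T] + E[\xi]$ together with $\sup_n \tilde Y^n_0 < \infty$ and $\xi \in L^1$). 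Applying Helly's theorem, I extract a further sequence in the asymptotic convex hull and an increasing integrable process $\hat A$ such that $\tilde A^n_t \to \hat A_t$ pointwise in $t$, $P$-a.s. I then define the candidate value process via
\begin{equation*}
\hat Y_t := \mathcal E^g_0(\xi,z) + \int_0^t \hat Z\,dW - \hat A_t\,,
\end{equation*}
taking \cadlag\ modifications where necessary. Since the stochastic integrals $\int \tilde Z^n\,dW$ converge (the $\mathcal L^2$-convergence of $\tilde Z^n$ to $\hat Z$ gives $\mathcal H^2$-convergence of the integrals) and $\tilde A^n_t \to \hat A_t$, the value processes $\tilde Y^n_t$ converge to $\hat Y_t$ in an appropriate sense, and $\hat Y_0 = \mathcal E^g_0(\xi,z)$ by construction.

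The remaining task is to verify that $(\hat Y, \hat Z)$ is genuinely a supersolution, i.e. that inequality \eqref{eq03} and $\hat Y_T \ge \xi$ hold. The terminal condition is the easier half: from $\tilde Y^n_T \ge \xi$ and Fatou-type arguments on the limit one obtains $\hat Y_T \ge \xi$. The main obstacle is passing the integral inequality to the limit, because the generator term $\int_s^t g_u(\tilde Y^n_u, \tilde Z^n_u, \tilde\Delta^n_u, \tilde\varGamma^n_u)\,du$ must be controlled from below along the limit. Here the joint convexity \ref{con} and lower semicontinuity \ref{lsc} are essential: one combines the $P\otimes dt$-a.e. convergence of $(\tilde Z^n, \tilde\Delta^n, \tilde\varGamma^n)$ to $(\hat Z, \hat\Delta, \hat\varGamma)$ with the convergence of the value processes, and invokes Fatou's lemma together with lower semicontinuity to conclude
\begin{equation*}
\int_s^t g_u(\hat Y_u, \hat Z_u, \hat\Delta_u, \hat\varGamma_u)\,du \le \liminf_n \int_s^t g_u(\tilde Y^n_u, \tilde Z^n_u, \tilde\Delta^n_u, \tilde\varGamma^n_u)\,du\,.
\end{equation*}
Feeding this lower bound back into the supersolution inequality for each $\tilde Y^n$ and taking limits yields \eqref{eq03} for $(\hat Y, \hat Z)$. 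I expect the delicate points to be (i) reconciling the several successive passages to asymptotic convex hulls so that all convergences hold simultaneously along one common sequence, and (ii) handling the measurability and regularity of $\hat Y$ (its \cadlag\ version and the compatibility of the Helly limit $\hat A$ with the limiting martingale part), so that the pointwise inequality \eqref{eq03} holds for all $0 \le s \le t \le T$ rather than merely almost everywhere.
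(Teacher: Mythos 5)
Your proposal follows essentially the same route as the paper's own proof: a minimizing sequence, Lemma \ref{L2compact}, passage to convex combinations via \ref{con}, the decomposition from Lemma \ref{lemma1}, Helly's theorem (Lemma \ref{helly}), a \cadlag\ modification of the limiting value process, and verification of \eqref{eq03} via \ref{lsc}, Fatou's lemma and right-approximation of the exceptional times. The only loose point is your claim that $\hat Y_0 = \mathcal E^g_0(\xi,z)$ ``by construction'': taking the \cadlag\ modification can strictly decrease the time-zero value (a priori one only gets $\hat Y_0 \le \mathcal E^g_0(\xi,z)$, since the Helly limit $\hat A$ may jump at $0$), and equality is recovered exactly as in the paper, because the verified membership $(\hat Y,\hat Z)\in\mathcal A(\xi,g,z)$ forces the reverse inequality $\hat Y_0 \ge \mathcal E^g_0(\xi,z)$.
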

\begin{proof}
\emph{Step 1: The candidate control $\hat Z$.} We extract a sequence $((Y^n,Z^n))\subset \mathcal A(\xi,g,z)$  such that
\begin{equation*}
\lim_{n\to\infty}Y^n_0 = \mathcal E_0^g(\xi,z)\,. 
\end{equation*}
Because $\sup_nY^n_0\le Y^1_0<\infty$, 
Lemma \ref{L2compact} assures the existence of a sequence $(\tilde Z^n)$ in the asymptotic convex hull 
of $(Z^n)$ that converges in $\mathcal L^2$ to some admissible process $\hat Z \in \Theta(z)$,
including $\mathcal L^2$-convergence of the corresponding decomposition parts.
In particular, we obtain that
\begin{equation}\label{eqprop02}
\int_0^t \tilde Z^n_udW_u \underset{n\to\infty}{\longrightarrow} \int_0^t\hat Z_udW_u\,, 
\quad\text{for all $t\in[0,T]$}\,,\,\, P\text{-almost surely}\,.
\end{equation}
Moreover, up to a subsequence, $((\tilde Z^n,\tilde\Delta^n,\tilde\varGamma^n))$ converges $P \otimes dt$-almost everywhere  
towards $(\hat Z,\hat\Delta,\hat\varGamma)$.\\

\noindent\emph{Step 2: The candidate value process $\hat Y$.} If we denote by $(\tilde Y^n)$ the sequence in the asymptotic convex hull of $(Y^n)$ corresponding to $(\tilde Z^n)$,
then all $(\tilde Y^n,\tilde Z^n)$ satisfy \eqref{eq03} due to \ref{con}. 
Let $\tilde A^n$ denote the increasing, predicable process of finite variation stemming from the decomposition of 
$\tilde Y^n = \tilde Y^n_0 + \tilde M^n - \tilde A^n$ given in Lemma \ref{lemma1}.
Since $(\tilde Z^n)$ is uniformly bounded in $\mathcal L^2$ and thus all $\int \tilde Z^n dW$ are true martingales, 
and $g$ satisfies \ref{pos}, the decomposition \eqref{eq21} yields
\begin{equation*}
E\edg{\tilde A^n_T} \le  Y^1_0 + E\edg{\xi^-} < \infty\,,
\end{equation*} 
as we assumed $\xi^-$ to be an element of $L^1$.
Now a version of Helly's theorem, see Lemma \ref{helly}, yields the existence of
a sequence in the asymptotic convex hull of $(\tilde A^n)$, again denoted by the previous expression, and of an increasing positive integrable process $\tilde A$
such that $\lim_{n\to\infty} \tilde A^{n}_t = \tilde A_t$, for all $t\in [0,T]$, $P$-almost surely.
We pass to the corresponding sequence on the side of $(\tilde Y^n)$ and $(\tilde Z^n)$,
define the process $\tilde Y$ pointwise for all $t\in[0,T]$ by
$\tilde Y_t := \lim_{n\to\infty} \tilde Y^n_t 
=  \mathcal E^g_0(\xi,z) + \int_0^t\hat Z_udW_u - \tilde A_t$,
and observe that it fulfills $\tilde Y_0 = \mathcal E_0^g(\xi,z)$ by construction.
However, since $\tilde Y$ is not necessarily \cadlag, we define our candidate value process $\hat Y$ by
$\hat Y_t := \lim_{s\downarrow t, s\in\mathbb Q} \tilde Y_s$, 
for all $t\in[0,T)$ and $\hat Y_T := \xi$.
The continuity of $\int \hat ZdW$ yields that 
\begin{equation}\label{eqprop04}
\hat Y_t = \mathcal E^g_0(\xi,z) + \int_0^t \hat Z_udW_u -  \lim_{s\downarrow t, s\in\mathbb Q} \tilde A_s\,.
\end{equation}
Since jump times of \cadlag\, processes\footnote{Note that as an increasing process, $\tilde A$ is in particular a submartingale and thus
its right- and left-hand limits exist, compare \citep[Proposition 1.3.14]{karatzas01}.%
~Consequently, the process $\lim_{s\downarrow \cdot, s\in\mathbb Q} \tilde A_s$ is \cadlag.} can be exhausted by a 
sequence of stopping times $(\sigma_j)\subset \mathcal T$, 
compare \citep[Proposition 1.2.26]{karatzas01}, which coincide with the jump times of $\tilde A$, we conclude that 
\begin{equation}\label{eqprop04a}
\hat Y=\tilde Y\,,\quad \text{$P\otimes dt$-almost everywhere}\,.
\end{equation}
Furthermore, $\tilde A$ increasing implies that
$\hat A_t := \lim_{s\downarrow t, s\in\mathbb Q} \tilde A_s \ge \tilde A_t$, for all $t\in[0,T]$ 
which, together with \eqref{eqprop04}, in turn yields that 
\begin{equation}\label{eqprop05}
\hat Y_t \le \tilde Y_t\,,\quad \text{for all $t\in[0,T]$}\,.
\end{equation}
Given that $(\hat Y,\hat Z)$ satisfies \eqref{eq03}, 
we could 
conclude that $(\hat Y,\hat Z)\in\mathcal A(\xi,g,z)$ 
and thus $\hat Y_0 \ge \mathcal E^g_0(\xi,z)=\tilde Y_0$ which, 
combined with \eqref{eqprop05}, 
would imply $\hat Y_0 = \mathcal E^g_0(\xi,z)$ 
and thereby finish the proof.\\

\noindent\emph{Step 3: Verification.} As to the remaining verification, 
we deduce from \eqref{eqprop04a} the existence of a set $A\in\mathcal F_T$, $P(A)=1$ 
with the following property.
For all $\omega \in A$, there exists a Lebesgue measurable set $\mathcal I(\omega) \subset [0,T]$
of measure $T$ such that  $\tilde Y^n_t(\omega) \longrightarrow \hat Y_t(\omega)$, for all $t\in \mathcal I(\omega)$.
We suppress the dependence of $\mathcal I$ on $\omega$ and recall however that in the following $s$ and $t$ 
may depend on $\omega$.
For $s,t\in\mathcal I$ with $s\le t $ holds
\begin{multline}\label{eqprop07}
\hat Y_s  - \int^t_s g_u(\hat Y_u,\hat Z_u, \hat\Delta_u,\hat\varGamma_u)du + \int_s^t \hat Z_udW_u\\
\ge \limsup_n\brak{\tilde Y^{n}_s  - 
\int_s^t g_u(\tilde Y^n_u,\tilde Z^{n}_u,\tilde\Delta^{n}_u,\tilde\varGamma^{n}_u)du + \int_s^t \tilde Z^{n}_udW_u}
\end{multline}
by means of \eqref{eqprop02}, the $P\otimes dt$-almost-everywhere 
convergence of $((\tilde Y^n,\tilde Z^n,\tilde\Delta^{n},\tilde\varGamma^{n}))$ towards $(\hat Y,\hat Z,\hat \Delta,\hat\varGamma)$,
the property \ref{lsc} and Fatou's lemma.
Using $((\tilde Y^n,\tilde Z^n))\subset \mathcal A(\xi,g,z)$, for all $n\in\N$, 
\eqref{eqprop07} can be further estimated by
\begin{equation}\label{eqprop08}
\hat Y_s  - \int_s^t g_u(\hat Y_u,\hat Z_u, \hat\Delta_u,\hat\varGamma_u)du + \int_s^t \hat Z_udW_u
\ge \limsup_n\tilde Y^n_t = \hat Y_t\,.
\end{equation}
Whenever $s,t\in\mathcal I^c$ with $s\le t$, we approximate both times from the right by sequences 
$(s^n)\subset\mathcal I$ and $(t^n)\subset \mathcal I$, respectively, such that $s^n\le t^n$.
Since \eqref{eqprop08} holds for all $s^n$ and $t^n$, the claim follows from the right-continuity of $\hat Y$ and the continuity
of all appearing integrals, which finally concludes the proof.
\end{proof}
Convexity of the mapping $(\xi,z)\mapsto \mathcal E_0^g(\xi,z)$
is provided by the following lemma.
\begin{lemma}\label{lem_convex}
Under the assumptions of Theorem \ref{prop1}, the operator $\mathcal E_0^g(\cdot,\cdot)$ is jointly convex. 
\end{lemma}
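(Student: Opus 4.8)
The plan is to reduce the joint convexity of $\mathcal E_0^g$ to the convexity properties of the feasibility sets $\mathcal A(\xi,g,z)$ already recorded in Lemma \ref{cor22}, transported through the fact that taking the initial value of a value process is linear under convex combinations. Fix two pairs $(\xi^1,z^1)$ and $(\xi^2,z^2)$ together with $\lambda\in[0,1]$, and write $\xi^\lambda:=\lambda\xi^1+(1-\lambda)\xi^2$ and $z^\lambda:=\lambda z^1+(1-\lambda)z^2$. The goal is the inequality
\[
\mathcal E_0^g(\xi^\lambda,z^\lambda)\le \lambda\,\mathcal E_0^g(\xi^1,z^1)+(1-\lambda)\,\mathcal E_0^g(\xi^2,z^2).
\]
If the right-hand side equals $+\infty$ there is nothing to prove, so I would assume both terms are finite; in particular $\mathcal A(\xi^i,g,z^i)\neq\emptyset$ for $i=1,2$.

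Next, for an arbitrary $\varepsilon>0$ I would choose near-optimal supersolutions $(Y^i,Z^i)\in\mathcal A(\xi^i,g,z^i)$ with $Y^i_0\le \mathcal E_0^g(\xi^i,z^i)+\varepsilon$. By the second assertion of Lemma \ref{cor22} (and the computation in its proof), the convex combination $\lambda(Y^1,Z^1)+(1-\lambda)(Y^2,Z^2)$ is again a supersolution, now belonging to $\mathcal A(\xi^\lambda,g,z^\lambda)$; note that its control starts at $\lambda z^1+(1-\lambda)z^2=z^\lambda$ since $Z^i_0=z^i$. Because $\mathcal E_0^g(\xi^\lambda,z^\lambda)$ is an infimum over this set, it is bounded above by the initial value of this particular element, namely $\lambda Y^1_0+(1-\lambda)Y^2_0$.

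Combining these observations gives
\[
\mathcal E_0^g(\xi^\lambda,z^\lambda)\le \lambda Y^1_0+(1-\lambda)Y^2_0\le \lambda\,\mathcal E_0^g(\xi^1,z^1)+(1-\lambda)\,\mathcal E_0^g(\xi^2,z^2)+\varepsilon,
\]
and letting $\varepsilon\downarrow 0$ yields the claim. Alternatively, one can bypass the $\varepsilon$-argument entirely by invoking Theorem \ref{prop1} to select genuine minimizers $(\hat Y^i,\hat Z^i)$ with $\hat Y^i_0=\mathcal E_0^g(\xi^i,z^i)$ and forming their convex combination directly.

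I do not expect a genuine obstacle here: the entire statement is a soft consequence of the convexity of $\mathcal A$ combined with the linearity of initial values under convex combinations. The only point deserving a word of care is the degenerate case in which one of the feasibility sets is empty, which is disposed of by the convention that the infimum over the empty set is $+\infty$, making the asserted inequality trivial.
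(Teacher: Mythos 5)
Your proposal is correct and takes essentially the same route as the paper: convex combinations of (near-)optimal supersolutions lie in $\mathcal A(\xi^\lambda,g,z^\lambda)$ by Lemma \ref{cor22}, and the infimum defining $\mathcal E_0^g(\xi^\lambda,z^\lambda)$ is bounded above by the initial value of that combination. The only cosmetic difference is that the paper invokes Theorem \ref{prop1} to select exact minimizers (the alternative you mention at the end), whereas your primary argument uses $\varepsilon$-optimal elements, which works equally well and does not even require the existence theorem.
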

\begin{proof}
For $z^1,z^2\in\R^{1\times d}$ and $\xi^1,\xi^2\in L^0$, the negative parts of which are integrable, 
assume that $\mathcal A(\xi^1,g,z^1)\neq\emptyset$ and $\mathcal A(\xi^2,g,z^2)\neq\emptyset$, as otherwise convexity
trivially holds.
For $\lambda\in[0,1]$ we set $z^\lambda: = \lambda z^1 + (1-\lambda)z^2$ and $\xi^\lambda:=\lambda \xi^1 + (1-\lambda)\xi^2$
so that Lemma \ref{cor22} implies  $\mathcal A(\xi^\lambda,g,z^\lambda)\neq\emptyset$.
By Theorem \ref{prop1}, there exist $(Y^1,Z^1)$ and $(Y^2,Z^2)$ in $\mathcal A(\xi^1,g,z^1)$ and $\mathcal A(\xi^2,g,z^2)$, respectively, such that
$ Y^1_0 = \mathcal E_0^g(\xi^1,z^1)$ and $Y^2_0 =\mathcal E_0^g(\xi^2,z^2)$.
Since $(\bar Y,\bar Z) := \lambda (Y^1,Z^1)  + (1-\lambda)(Y^2,Z^2)$ is an element of $\mathcal A(\xi^\lambda,g,z^\lambda)$ due to \ref{con},
it holds $\mathcal E_0^g(\xi^\lambda,z^\lambda)\le \bar Y_0 $ by definition of the operator $\mathcal E^g_0$.
\end{proof}

\subsection{Stability results}
Next, we show that the non-linear operator $\xi\mapsto \mathcal E_0^g(\xi,z)$ 
exhibits stability properties such as monotone convergence, the Fatou property or $L^1$-lower semicontinuity.
The following theorem establishes monotone convergence and the Fatou property of $\mathcal E^g_0(\cdot,z)$.
Similar results in the unconstrained case have been obtained in \citep[Theorem 4.7]{CSTP}.
\begin{theorem}\label{mon_fatou}
For $z\in\R^{1\times d}$ and $g$ a generator fulfilling \ref{lsc}, \ref{pos}, \ref{con} and \ref{dgc}, and $(\xi_n)$ a sequence 
in $L^0$ such that $(\xi_n^-)\subset L^1$, 
the following holds.
\begin{itemize}
\item Monotone convergence: If $(\xi_n)$ is increasing $P$-almost surely to $\xi\in L^0$, then it holds
$\lim_{n\to\infty} \mathcal E_0^g(\xi_n,z) = \mathcal E_0^g(\xi,z)$.
\item Fatou's lemma: If $\xi_n\ge\eta$, for all $n\in\N$, where $\eta\in L^1$, then it holds 
$\mathcal E_0^g(\liminf_n \xi_n,z) \le \liminf_n \mathcal E_0^g(\xi_n,z)$.
\end{itemize}
\end{theorem}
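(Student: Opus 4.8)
The plan is to establish the monotone convergence statement first and then deduce Fatou's lemma from it by a routine truncation. For monotone convergence, one inequality is immediate: since $(\xi_n)$ increases to $\xi$ we have $\xi_n\le\xi$, so the monotonicity of $\mathcal E_0^g(\cdot,z)$ recorded before Theorem \ref{prop1} gives $\mathcal E_0^g(\xi_n,z)\le\mathcal E_0^g(\xi,z)$; the nondecreasing sequence $(\mathcal E_0^g(\xi_n,z))$ therefore has a limit $L$ with $L\le\mathcal E_0^g(\xi,z)$. The whole content is the reverse inequality $\mathcal E_0^g(\xi,z)\le L$, and if $L=+\infty$ this is trivial, so I would assume $L<\infty$. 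For each $n$, Theorem \ref{prop1} supplies a minimal supersolution $(Y^n,Z^n)\in\mathcal A(\xi_n,g,z)$ with $Y^n_0=\mathcal E_0^g(\xi_n,z)\le L$. Because $\xi_n\ge\xi_1$ implies $\mathcal A(\xi_n,g,z)\subseteq\mathcal A(\xi_1,g,z)$, the entire sequence lies in $\mathcal A(\xi_1,g,z)$, satisfies $\sup_n Y^n_0\le L<\infty$, and $\xi_1^-\in L^1$. This is precisely the hypothesis of Lemma \ref{L2compact} together with the construction performed in Theorem \ref{prop1}.

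I would then rerun that construction with $\xi_1$ as the common lower bound: extract a sequence $(\tilde Z^n)$ in the asymptotic convex hull converging in $\mathcal L^2$ to some $\hat Z\in\Theta(z)$ with $\mathcal L^2$-convergence of the decomposition parts, pass to the corresponding convex combinations $(\tilde Y^n)$, apply Helly's theorem (Lemma \ref{helly}) to the associated increasing processes $\tilde A^n$, and define $(\hat Y,\hat Z)$ pointwise as in \eqref{eqprop04}. The uniform bound $E[\tilde A^n_T]\le L+E[\xi_1^-]<\infty$ needed for Helly follows exactly as in Theorem \ref{prop1} from \ref{pos} and the supermartingale property, now using $(Y^n_T)^-\le\xi_n^-\le\xi_1^-$. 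By construction $\hat Y_0\le L$, and the dynamic inequality of \eqref{eq03} for $0\le s\le t<T$ is verified verbatim via \ref{lsc}, Fatou's lemma and the $P\otimes dt$-almost-everywhere convergences, just as in \eqref{eqprop07}--\eqref{eqprop08}.

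The one genuinely new point, and the step I expect to be the main obstacle, is the terminal condition $\hat Y_T\ge\xi$: the approximating supersolutions only satisfy $\tilde Y^n_T\ge\xi_n$, not $\ge\xi$. Here the monotonicity of $(\xi_n)$ is essential. Since $\tilde Y^n$ is a convex combination of $Y^k$ with $k\ge n$, one has $\tilde Y^n_T\ge\sum_{k\ge n}\lambda^n_k\xi_k\ge\xi_n$, whence $\limsup_n\tilde Y^n_T\ge\lim_n\xi_n=\xi$. Feeding $t=T$ into the estimate \eqref{eqprop08}, which is legitimate because the convergence \eqref{eqprop02} and the continuity of all the integrals hold up to and including $T$, gives, for $s$ in the full-measure set $\mathcal I$, $\hat Y_s-\int_s^T g_u(\hat Y_u,\hat Z_u,\hat\Delta_u,\hat\varGamma_u)\,du+\int_s^T\hat Z_u\,dW_u\ge\limsup_n\tilde Y^n_T\ge\xi$. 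Setting $\hat Y_T:=\xi$ and extending to arbitrary $s$ by the right-continuity of $\hat Y$ and the continuity of the integrals then yields both the terminal inequality and the remaining dynamic inequalities up to $T$. Thus $(\hat Y,\hat Z)\in\mathcal A(\xi,g,z)$, so $\mathcal E_0^g(\xi,z)\le\hat Y_0\le L$, which closes the monotone convergence part.

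Finally, I would reduce Fatou's lemma to the monotone case. Put $\zeta_n:=\inf_{k\ge n}\xi_k$; then $(\zeta_n)$ is nondecreasing with $\zeta_n\uparrow\liminf_n\xi_n$, and $\zeta_n\ge\eta$ forces $\zeta_n^-\le\eta^-\in L^1$, so the monotone convergence statement applies to $(\zeta_n)$. Moreover $\zeta_n\le\xi_k$ for every $k\ge n$, so monotonicity gives $\mathcal E_0^g(\zeta_n,z)\le\inf_{k\ge n}\mathcal E_0^g(\xi_k,z)\le\liminf_n\mathcal E_0^g(\xi_n,z)$. Letting $n\to\infty$ and invoking monotone convergence yields $\mathcal E_0^g(\liminf_n\xi_n,z)=\lim_n\mathcal E_0^g(\zeta_n,z)\le\liminf_n\mathcal E_0^g(\xi_n,z)$, the case $\liminf_n\mathcal E_0^g(\xi_n,z)=+\infty$ being trivial. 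I do not anticipate any difficulty in this last paragraph beyond the bookkeeping needed to ensure the relevant admissible sets are nonempty, which follows from $\mathcal A(\xi_k,g,z)\subseteq\mathcal A(\zeta_n,g,z)$ for $k\ge n$.
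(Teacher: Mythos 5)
Your proposal is correct and follows essentially the same route as the paper: extract minimal supersolutions $(Y^n,Z^n)$ via Theorem \ref{prop1}, run the compactness construction of Lemma \ref{L2compact} and Theorem \ref{prop1} (using $\xi_n^-\le\xi_1^-$), verify that the limit pair lies in $\mathcal A(\xi,g,z)$, and reduce Fatou's lemma to monotone convergence via $\zeta_n:=\inf_{k\ge n}\xi_k$. In fact you make explicit the one point the paper glosses over with ``arguments \ldots directly translate'', namely that the terminal inequality $\hat Y_T\ge\xi$ follows from $\tilde Y^n_T\ge\xi_n\uparrow\xi$, which is exactly the intended argument.
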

\begin{proof}
\textit{Monotone convergence:} First, note that by monotonicity 
the limit $\bar Y_0:=\lim_n\mathcal E_0^g(\xi_n,z)$ exists and satisfies $\bar Y_0\le\mathcal E_0^g(\xi,z)$.  
Other than in the trivial case of $+\infty=\bar Y_0\le\mathcal E_0^g(\xi,z)$ 
we have $\mathcal A(\xi_n,g,z)\neq\emptyset$, for all $n\in\N$, which, together with $(\xi_n^-)\subset L^1$
implies $(\xi_n)\subset L^1$.
Furthermore, Theorem \ref{prop1} yields the existence of supersolutions $(Y^n,Z^n)\in\mathcal A(\xi_n,g,z)$ 
fulfilling $Y^n_0=\mathcal E_0^g(\xi_n,z)$, for all $n\in\N$.
In particular, we have that $Y^n_0\le \bar Y_0$ and $\xi^-_n\le \xi^-_1$, for all $n\in\N$.
Arguments analogous to the ones used in Lemma \ref{L2compact} and the proof of Theorem \ref{prop1} directly translate to the present setting and 
provide both a candidate control $\hat Z\in\Theta(z)$
to which $(\tilde Z^n)$ converges and a corresponding $\tilde Y_t:=\lim_n\tilde Y^n_t$, 
and ensure that $(\hat Y,\hat Z)$ belongs to $\mathcal A(\xi,g,z)$,
where $\hat Y:=\lim_{s\in\mathbb Q,s\downarrow\cdot}\tilde Y_s$ on $[0,T)$ and $\hat Y_T:=\xi$. 
In particular, we obtain 
$\hat Y_0\le \tilde Y_0=\bar Y_0$.
Hence, as $\mathcal A(\xi,g,z)\neq\emptyset$ and $\xi^-\in L^1$, 
there exists $(Y,Z)\in\mathcal A(\xi,g,z)$ such that $Y_0=\mathcal E_0^g(\xi,z)$. 
By minimality of $(Y,Z)$ at time zero, however, this entails $Y_0\le\hat Y_0\le \bar Y_0$ and we conclude that 
$\lim_{n\to\infty}\mathcal E_0^g(\xi_n,z) = \mathcal E_0^g(\xi,z)$.

\textit{Fatou's lemma:} If we define $\zeta_n:=\inf_{k\ge n}\xi_k$, then $\xi_k\ge\eta$ for all $k\in\N$ 
implies $\zeta_n\ge\eta$ for all $n\in\N$ which in turn gives $(\zeta^-_n)\subset L^1$,
and thus the monotone convergence established above can be used exactly as in \citep[Theorem 4.7]{CSTP} to obtain the assertion.
\end{proof}
As a consequence of the monotone convergence property we obtain the ensuing theorem providing $L^1$-lower semicontinuity of the 
operator $\mathcal E^g_0(\cdot,z)$.
The proof goes along the lines of \citep[Theorem 4.9]{CSTP}
and is thus omitted here.
\begin{theorem}\label{thm_L1_lsc}
Let $z\in\R^{1\times d}$ and $g$ be a generator fulfilling \ref{lsc}, \ref{pos}, \ref{con} and \ref{dgc}.
Then $\mathcal E^g_0(\cdot,z)$ is $L^1$-lower semicontinuous. 
\end{theorem}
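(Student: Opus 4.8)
The plan is to deduce $L^1$-lower semicontinuity directly from the Fatou property already established in Theorem \ref{mon_fatou}, combined with the classical extraction of a dominated, almost surely convergent subsequence. Unravelling the definition, I need to show that $\mathcal E^g_0(\xi,z) \le \liminf_n \mathcal E^g_0(\xi_n,z)$ whenever $\xi_n \to \xi$ in $L^1$. If the right-hand side equals $+\infty$ there is nothing to prove, so I may assume it is finite.

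First I would pass to a subsequence $(\xi_{n_k})$ along which the liminf is attained, that is $\lim_k \mathcal E^g_0(\xi_{n_k},z) = \liminf_n \mathcal E^g_0(\xi_n,z)$. Since $L^1$-convergence is inherited by subsequences, $\xi_{n_k} \to \xi$ in $L^1$ as well. Second, using the standard $L^1$-completeness argument I would extract a further subsequence, relabelled again as $(\xi_{n_k})$, that converges to $\xi$ $P$-almost surely and is dominated by a single integrable random variable: choosing the indices so that $\sum_k \norm{\xi_{n_{k+1}} - \xi_{n_k}}_{L^1} < \infty$, the random variable $\zeta := |\xi| + |\xi_{n_1} - \xi| + \sum_k |\xi_{n_{k+1}} - \xi_{n_k}|$ lies in $L^1$ by monotone convergence and satisfies $|\xi_{n_k}| \le \zeta$ for every $k$. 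In particular $\eta := -\zeta \in L^1$ is a uniform lower bound, $\xi_{n_k} \ge \eta$, and $\liminf_k \xi_{n_k} = \xi$ $P$-almost surely. Crucially, passing to this further subsequence does not change the value $\liminf_n \mathcal E^g_0(\xi_n,z)$ that was selected in the first step, since the sequence of numbers $\mathcal E^g_0(\xi_{n_k},z)$ was already convergent.

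Finally, the Fatou part of Theorem \ref{mon_fatou}, applied to $(\xi_{n_k})$ with lower bound $\eta \in L^1$, yields $\mathcal E^g_0(\xi,z) = \mathcal E^g_0(\liminf_k \xi_{n_k},z) \le \liminf_k \mathcal E^g_0(\xi_{n_k},z) = \liminf_n \mathcal E^g_0(\xi_n,z)$, which is exactly the desired inequality. The only genuinely delicate point is the order of the two subsequence extractions: one must first select the subsequence realising the liminf and only afterwards extract the almost surely convergent, dominated sub-subsequence, so that the target value $\liminf_n \mathcal E^g_0(\xi_n,z)$ is preserved when Fatou is invoked. Everything else is routine, which is why I expect the argument to run exactly along the lines of \citep[Theorem 4.9]{CSTP}.
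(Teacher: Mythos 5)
Your proof is correct and follows exactly the route the paper intends: it deduces $L^1$-lower semicontinuity from the Fatou property of Theorem \ref{mon_fatou} by first passing to a subsequence realising the $\liminf$ and then extracting a dominated, $P$-almost surely convergent sub-subsequence, which is precisely the argument of \citep[Theorem 4.9]{CSTP} that the paper cites in place of a written-out proof. Your remark on the order of the two extractions is the right delicate point to flag; nothing is missing.
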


\section{Duality under constraints}\label{sec023}
The objective of this section is to 
construct a solution of constrained BSDEs via duality
and, for the case of a quadratic generator, to obtain an explicit form for $\mathcal E^*_0$, the Fenchel-Legendre transform of $\mathcal E^g_0$.
Let us assume for the rest of this section that our generator $g$ is independent of $y$, that is
$g_u(y,z,\delta,\gamma)=g_u(z,\delta,\gamma)$, and that it satisfies \ref{lsc}, \ref{pos}, \ref{con} and \ref{dgc}.
Let us further fix some $z\in \R^{1\times d}$ as initial value of the controls and 
set $\mathcal E^g_0(\cdot):=\mathcal E^g_0(\cdot,z)$ 
for the remainder of this section.
Whenever we say that the BSDE$(\xi,g)$ has a solution $(Y,Z)$, we mean that there exists $(Y,Z)\in\mathcal A(\xi,g,z)$ such that
\eqref{eq03} is satisfied with equalities instead of inequalities.
Observe that $\mathcal E^g_0(\cdot)$, being convex and $L^1$-lower semicontinuous, is in particular $\sigma(L^1,L^\infty)$-lower 
semicontinuous, and thus, by classical duality results admits the Fenchel-Moreau representation
\begin{equation}\label{eq_duality_02}
\mathcal E^g_0(\xi) = \sup_{v\in L^\infty} \crl{E[v\xi] - \mathcal E_0^*(v)}\,,\quad \xi\in L^1\,,
\end{equation}
where for $v\in L^\infty$ the convex conjugate is given by
\begin{equation*}
\mathcal E^*_0 (v):= \sup_{\xi\in L^1} \crl{E[v\xi] - \mathcal E_0^g(\xi)} \,.
\end{equation*}
It is proved in the next lemma that the domain of $\mathcal E^*_0$ is concentrated on non-negative $v\in L^\infty_+$
satisfying $E[v]=1$.
\begin{lemma}\label{lemma_duality_appendix}
Within the representation \eqref{eq_duality_02}, that is $\mathcal E^g_0(\xi) = \sup_{v\in L^\infty} \{E[v\xi] - \mathcal E_0^*(v)\}$, 
the supremum might be restricted to those $v\in L^\infty_+$ satisfying $E[v]=1$.
\end{lemma}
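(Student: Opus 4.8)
The plan is to show that the convex conjugate $\mathcal E^*_0$ takes the value $+\infty$ outside the set $\crl{v\in L^\infty_+ : E[v]=1}$; since any such $v$ then contributes $-\infty$ to the supremum in \eqref{eq_duality_02}, it may be discarded without changing the value. Two structural features of $\mathcal E^g_0$ drive the argument: its monotonicity, noted before Theorem \ref{prop1}, which will force $v\ge 0$, and a cash-additivity property stemming from the standing assumption that $g$ is independent of $y$, which will force $E[v]=1$.

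First I would rule out $v$ that fail to be non-negative. Suppose $P[v<0]>0$ and set $A:=\set{v<0}$, so that $E[v1_A]<0$. Fix any $\xi_0\in L^1$ with $\mathcal E^g_0(\xi_0)<\infty$ (equivalently $\mathcal A(\xi_0,g,z)\neq\emptyset$) and consider the perturbations $\xi_0 - n1_A$, $n\in\N$. These lie in $L^1$ with integrable negative part, and since $\xi_0 - n1_A\le \xi_0$ the inclusion $\mathcal A(\xi_0,g,z)\subseteq \mathcal A(\xi_0-n1_A,g,z)$ keeps the admissible set non-empty, while monotonicity gives $\mathcal E^g_0(\xi_0-n1_A)\le \mathcal E^g_0(\xi_0)$. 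Inserting $\xi_0-n1_A$ into the definition of the conjugate yields
\begin{equation*}
\mathcal E^*_0(v)\ge E[v(\xi_0-n1_A)]-\mathcal E^g_0(\xi_0-n1_A)\ge E[v\xi_0]-n\,E[v1_A]-\mathcal E^g_0(\xi_0)\,,
\end{equation*}
and, as $-E[v1_A]>0$, letting $n\to\infty$ forces $\mathcal E^*_0(v)=+\infty$. Hence the domain of $\mathcal E^*_0$ is contained in $L^\infty_+$.

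Next I would establish cash-additivity. For $(Y,Z)\in\mathcal A(\xi,g,z)$ and $c\in\R$, the shifted pair $(Y+c,Z)$ again satisfies \eqref{eq03}, now with terminal condition $\xi+c$, because $g$ does not depend on $y$ and the generator term is therefore unchanged; conversely $(Y-c,Z)$ turns a supersolution for $\xi+c$ into one for $\xi$. Taking infima over initial values gives $\mathcal E^g_0(\xi+c)=\mathcal E^g_0(\xi)+c$ for every $c\in\R$. Substituting $\xi+c$ for $\xi$ in the conjugate, which is legitimate because $\xi\mapsto\xi+c$ is a bijection of $L^1$, I obtain
\begin{equation*}
\mathcal E^*_0(v)=\sup_{\xi\in L^1}\crl{E[v(\xi+c)]-\mathcal E^g_0(\xi+c)}=\mathcal E^*_0(v)+c\brak{E[v]-1}\,,
\end{equation*}
valid for all $c\in\R$. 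On the domain, where $\mathcal E^*_0(v)<\infty$, this identity forces $E[v]=1$; for $E[v]\neq 1$ one reads off $\mathcal E^*_0(v)=+\infty$ directly by sending $c$ to $\pm\infty$.

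Combining the two steps, $\mathcal E^*_0(v)<\infty$ entails both $v\in L^\infty_+$ and $E[v]=1$, so restricting the supremum in \eqref{eq_duality_02} to such $v$ leaves it unchanged. The only point requiring genuine care is the bookkeeping in the first step, namely verifying that the perturbed terminal conditions remain $L^1$-integrable and keep $\mathcal A(\cdot,g,z)$ non-empty; both are handled by the monotone inclusion $\mathcal A(\xi^1,g,z)\subseteq\mathcal A(\xi^2,g,z)$ for $\xi^1\ge\xi^2$ recorded before Theorem \ref{prop1}, and the rest is routine.
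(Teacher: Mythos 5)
Your proof is correct and follows essentially the same route as the paper: show that $\mathcal E^*_0(v)=+\infty$ unless $v\ge 0$ and $E[v]=1$, using monotonicity of $\mathcal E^g_0$ for the sign constraint and cash-additivity (coming from the $y$-independence of $g$) for the normalization. The only cosmetic differences are that you exhibit the separating test function explicitly as $1_{\{v<0\}}$ where the paper invokes the polar relation between $L^1_+$ and $L^\infty_+$, and that you prove cash-additivity inline and apply it via the shift $\xi\mapsto\xi+c$, where the paper cites it and tests against the constants $\pm n$.
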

\begin{proof}
First, we assume without loss of generality that $\mathcal E^g_0(0)<+\infty$.
Indeed, a slight modification of the argumentation below remains valid 
using any $\xi\in L^1$ such that $\mathcal E^g_0(\xi)<+\infty$.\footnote{Note that the case $\mathcal E^g_0\equiv+\infty$ on $L^1$
immediately yields $\mathcal E^*_0\equiv-\infty$ on $L^\infty$ and is thus neglected.}
We show that $\mathcal E^*_0(v)=+\infty$ as soon as $v\in L^\infty\backslash L^\infty_+$ or $E[v]\neq 1$. 
For $v\in L^\infty\backslash L^\infty_+$, $L^1_+$ being the polar of $L^\infty_+$ yields the existence of $\bar\xi\in L^1_+$ such that 
$E[v\bar\xi]<0$.
Monotonicity of $\mathcal E^g_0$ then gives $\mathcal E^g_0(-n\bar\xi) \le \mathcal E^g_0(0)$ for all $n\in\N$.
Hence,
\begin{equation*}
\mathcal E^*_0(v) \ge \sup_n\crl{nE[-v\bar\xi]-\mathcal E^g_0(-n\bar\xi)} \ge \sup_n\crl{nE[-v\bar\xi]} -\mathcal E^g_0(0) = + \infty\,.
\end{equation*}
Furthermore, since the generator does not depend on $y$, the function $\mathcal E^g_0$ is cash additive, 
compare \citep[Proposition 3.3.5]{CSTP}, and we deduce that, 
for all $n\in\N$ it holds
\begin{equation*}
\mathcal E^*_0(v) \ge E[vn] - \mathcal E^g_0(0) - n = n(E[v]-1) - \mathcal E^g_0(0)\,. 
\end{equation*}
Thus, if $E[v]>1$, then $\mathcal E^*_0(v)=+\infty$.
A reciprocal argument with $\xi=-n$ finally gives $\mathcal E^*_0(v)=+\infty$ whenever $E[v]<1$.
\end{proof}
By the previous result,  we may use $v\in L^\infty_+$, $E[v]=1$, in order to define a measure $Q$ that is absolutely continuous with respect to $P$
by setting $\frac{dQ}{dP} := v$.
Thereby \eqref{eq_duality_02} may be reformulated as 
\begin{equation}\label{eq_duality_03b}
\mathcal E^g_0(\xi) = \sup_{Q\ll P} \crl{E_Q[\xi] - \mathcal E_0^*(Q)}\,, \quad\xi\in L^1\,,
\end{equation}
where
\begin{equation}\label{eq_duality_03a}
\mathcal E^*_0 (Q):= \sup_{\xi\in L^1} \crl{E_Q[\xi] - \mathcal E_0^g(\xi)}\,.
\end{equation}
Note that $\mathcal A(\xi,g,z)=\emptyset$ implies $\mathcal E^g_0(\xi)=+\infty$ and hence such terminal
conditions are irrelevant for the supremum in \eqref{eq_duality_03a}.
Let us denote by $\mathcal Q$ the set of all probability measures equivalent to $P$ with bounded Radon-Nikodym derivative.
For each $Q\in\mathcal Q$, there exists a progressively measurable process $q$ taking values in $\R^{1\times d}$ such that for all $t\in[0,T]$
\begin{equation*}
\frac{dQ}{dP} |_{\mathcal F_t} = \exp\brak{\int_0^t q_udW_u - \frac{1}{2} \int_0^t |q_u|^2 du} \,.
\end{equation*}
By Girsanov's theorem, the process $W^Q_t := W_t - \int_0^t q_u du$ is a $Q$-Brownian motion.
The following lemma is a valuable tool regarding the characterization of $\mathcal E^*_0$.
\begin{lemma}\label{lem_dual00}
The supremum in \eqref{eq_duality_03a} can be restricted to random variables $\xi \in L^1$
for which the BSDE with parameters $(\xi,g)$ has a solution with value process starting in $\mathcal E^g_0(\xi)$. 
More precisely, for any $Q \in \mathcal{Q}$ holds
\begin{equation*}\label{eq:proof02}
\mathcal{E}^\ast_0(Q) = \sup_{ \xi \in L^1}\{ E_Q\left[ \xi \right] - \mathcal{E}^g_0(\xi) \,: \, 
\mbox{BSDE$(\xi, g)$ has a solution $(Y,Z)$ with $Y_0=\mathcal E^g_0(\xi)$}\}\,. 
\end{equation*}
\end{lemma}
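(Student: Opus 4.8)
The plan is to establish the non-trivial inequality $\mathcal{E}^\ast_0(Q) \le \sup\{\ldots\}$, the reverse being immediate since the restricted supremum ranges over a subset of all $\xi\in L^1$. I would fix $Q\in\mathcal Q$ and an arbitrary $\xi\in L^1$ with $\mathcal A(\xi,g,z)\neq\emptyset$; terminal conditions with empty admissible set give $\mathcal E^g_0(\xi)=+\infty$ and thus contribute $-\infty$ to the supremum, so they may be ignored. For such $\xi$, Theorem \ref{prop1} provides a minimal supersolution $(\hat Y,\hat Z)\in\mathcal A(\xi,g,z)$ with $\hat Y_0=\mathcal E^g_0(\xi)$, and the equivalent formulation in the footnote to \eqref{eq03} yields an increasing \cadlag\ process $\hat K$ with $\hat K_0=0$ such that $\hat Y_t=\xi+\int_t^T g_u(\hat Z_u,\hat\Delta_u,\hat\varGamma_u)\,du+(\hat K_T-\hat K_t)-\int_t^T\hat Z_u\,dW_u$. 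From the canonical decomposition \eqref{eq21} of Lemma \ref{lemma1} together with \ref{pos} one gets $E[\hat K_T]<\infty$, so $\hat K_T\in L^1_+$.

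Next I would absorb the increasing part into the terminal condition to manufacture a terminal value that admits a genuine solution. Define the continuous process $Y'_t:=\hat Y_0-\int_0^t g_u(\hat Z_u,\hat\Delta_u,\hat\varGamma_u)\,du+\int_0^t\hat Z_u\,dW_u=\hat Y_t+\hat K_t$ and set $\xi':=Y'_T=\xi+\hat K_T$. By construction $(Y',\hat Z)$ satisfies \eqref{eq03} with equalities and $Y'_T=\xi'$, using the \emph{same} control $\hat Z\in\Theta(z)$; hence BSDE$(\xi',g)$ has the solution $(Y',\hat Z)$. Since $\hat K_T\ge 0$ we have $\xi'\ge\xi$, and $\xi'\in L^1$ because both $\xi$ and $\hat K_T$ are.

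It then remains to check that $Y'_0=\mathcal E^g_0(\xi')$ and to compare the dual values. Minimality follows from a sandwich argument: on one hand $(Y',\hat Z)\in\mathcal A(\xi',g,z)$ gives $\mathcal E^g_0(\xi')\le Y'_0=\hat Y_0=\mathcal E^g_0(\xi)$; on the other hand $\xi'\ge\xi$ and the monotonicity of $\mathcal E^g_0$ give $\mathcal E^g_0(\xi')\ge\mathcal E^g_0(\xi)=Y'_0$, whence $\mathcal E^g_0(\xi')=Y'_0$. Thus $\xi'$ is an eligible competitor in the restricted supremum. Finally, since $Q\in\mathcal Q$ has bounded density and $\hat K_T\in L^1_+$, the quantity $E_Q[\hat K_T]$ is finite and non-negative, so $E_Q[\xi']-\mathcal E^g_0(\xi')=E_Q[\xi]+E_Q[\hat K_T]-\mathcal E^g_0(\xi)\ge E_Q[\xi]-\mathcal E^g_0(\xi)$. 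Taking the supremum over all eligible $\xi$ would yield $\mathcal{E}^\ast_0(Q)\le\sup\{\ldots\}$ and conclude the proof.

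The step demanding the most care is verifying that the surgically modified pair $(Y',\hat Z)$ is genuinely admissible and minimal: one must confirm that $\hat Z$ remains in $\Theta(z)$ (it is unchanged, so its control decomposition and supermartingale property are inherited), that $Y'$ is a bona fide \cadlag\ value process — here continuity of $Y'$ and finiteness of $\int_0^T g_u\,du$, the latter from $E[\int_0^T g_u\,du]<\infty$ exactly as in \eqref{L2C01} — and that $\hat K_T$ is integrable so that $\xi'\in L^1$ and $E_Q[\hat K_T]$ is well defined. The conceptual heart is the observation that absorbing the increasing process into the terminal condition raises the terminal value while leaving the time-zero value unchanged, which is precisely what is needed to dominate the dual functional.
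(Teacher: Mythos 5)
Your proposal is correct and is essentially the paper's own proof: your $Y'_t=\mathcal E^g_0(\xi)-\int_0^t g_u(\hat Z_u,\hat\Delta_u,\hat\varGamma_u)\,du+\int_0^t \hat Z_u\,dW_u$ is exactly the paper's candidate $Y^1$, and both the sandwich argument $\mathcal E^g_0(\xi')\le Y'_0=\mathcal E^g_0(\xi)\le\mathcal E^g_0(\xi')$ and the final comparison $E_Q[\xi']-\mathcal E^g_0(\xi')\ge E_Q[\xi]-\mathcal E^g_0(\xi)$ coincide with the paper's steps. The only (cosmetic) difference is bookkeeping: you deduce $\xi'\in L^1$ from $\hat K_T\in L^1_+$ (which, note, also uses the supermartingale property of $\int\hat Z\,dW$ from admissibility, not just \eqref{eq21} and positivity), whereas the paper splits $(Y^1_T)^{\pm}$ and invokes the martingale property of $\int Z\,dW$.
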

\begin{proof}
It suffices to show that
\begin{multline}\label{solution}
\mathcal{E}^\ast_0(Q) \\
\le \sup_{\xi\in L^1}\{ E_{Q}[\xi] - \mathcal{E}_0^g(\xi) \,:\, \text{BSDE$(\xi,g)$ has a solution $(Y,Z)$ with $Y_0=\mathcal E^g_0(\xi)$} \}\,,
\end{multline}
since the reverse inequality is satisfied by definition of $\mathcal E^*_0(\cdot)$.
Consider to this end a terminal condition $\xi \in L^1$ with associated minimal supersolution $(Y,Z)\in\mathcal A(\xi,g,z)$, that is $Y_0 = \mathcal E^g_0(\xi)$. 
Put, for all $t \in [0,T]$,
\begin{equation*}
Y^1_t=\mathcal{E}^g_0( \xi ) - \int_{0}^{t}g_u\left( Z_u,\Delta_u,\varGamma_u \right)du+\int_{0}^{t}Z_u dW_u\,.
\end{equation*}
Relation \eqref{eq03} implies $Y^1_T\geq Y_T \geq \xi$ and thus $\mathcal{E}^g_0(Y^1_T)\geq \mathcal{E}^g_0(\xi)$ and $(Y^1_T)^- \in L^1$.
Furthermore, observe that 
\begin{equation*}
(Y^1_T)^+ = \brak{\mathcal{E}^g_0( \xi ) - \int_0^Tg_u\left(Z_u,\Delta_u,\varGamma_u \right)du+\int_0^TZ_u dW_u}^+ \le
\brak{\mathcal{E}^g_0( \xi )  +\int_0^TZ_u dW_u}^+
\end{equation*}
due to the positivity of the generator.
But since the right-hand side is in $L^1$ by means of the martingale property of $\int Z dW$, we deduce that 
$(Y^1_T)^+\in L^1$, allowing us to conclude that $Y^1_T\in L^1$.
On the other hand, $(Y^1,Z)\in \mathcal{A}(Y^1_T,g,z)$ holds by definition of $Y^1$.
Hence, we conclude $\mathcal{E}^g_0(Y^1_T) \leq Y^1_0 =\mathcal{E}^g_0(\xi)$. 
Thus, $\mathcal{E}^g_0(Y^1_T) = \mathcal{E}^g_0(\xi)$, and $(Y^1,Z)$ is a solution of the BSDE with parameters $(Y^1_T,g)$. 
Observe further that $\mathcal{E}^g_0(Y^1_T) - \mathcal{E}^g_0(\xi)=0\le Y^1_T - \xi$
which, by taking expectation under $Q$, implies
\begin{equation*}
E_{Q}[\xi] - \mathcal{E}^g_0(\xi) \leq E_{Q}[Y^1_T ] - \mathcal{E}^g_0(Y^1_T)\,.
\end{equation*}
Taking the supremum yields \eqref{solution}, the proof is done.
\end{proof}
By means of the preceding lemma it holds
\begin{align}\label{eq_duality_01}
\mathcal E^*_0 (Q)= &\sup_{\xi\in L^1} \crl{E_Q[\xi] - \mathcal E_0^g(\xi)} \nonumber\\
= &\sup_{\xi\in L^1} \crl{E_Q\left[\mathcal E_0^g(\xi) - \int_0^T g_u(Z_u,\Delta_u,\varGamma_u) du + 
         \int_0^T Z_u dW_u\right] - \mathcal E_0^g(\xi)} \nonumber\\
= &\sup_{(\Delta,\varGamma)\in\Pi} \crl{E_Q\left[- \int_0^T  g_u(Z_u,\Delta_u,\varGamma_u)  du + 
         \int_0^T Z_u dW_u\right] }         
\end{align}
where 
\begin{equation}\label{eq_Pi}
\Pi:=\crl{(\Delta,\varGamma)\in\mathcal L^2\times\mathcal L^2 :\begin{array}{l}  \exists \xi\in L^1:\enspace\mbox{BSDE$(\xi,g)$ has a solution $(Y,Z)$}\\
\mbox{with $Y_0=\mathcal E^g_0(\xi)$ and $Z = z+\int\Delta du + \int\varGamma dW$}  \end{array} } \,.
\end{equation}
Whenever $Q\in\mathcal Q$, Girsanov's theorem applies and we may exploit the decomposition of $Z$ and use 
that $\int Z dW^Q$ and $\int \varGamma dW^Q$are $Q$-martingales
in order to express the right-hand side of \eqref{eq_duality_01}  without Brownian integrals. 
More precisely,
\begin{multline}\label{eq_duality_05}
\mathcal E^*_0 (Q) = 
\sup_{(\Delta,\varGamma)\in\Pi} \crl{E_Q\left[ \int_0^T \brak{- g_u(Z_u,\Delta_u,\varGamma_u) 
+ q_u\int_0^u(\Delta_s + q_s\varGamma_s)ds} du\right]} \\ + zE_Q\Bigg[\int_0^Tq_udu\Bigg]\,.
\end{multline}
We continue with two lemmata that allow us to restrict the set of measures in the representation \eqref{eq_duality_03b}
to a sufficiently nice subset of $\mathcal Q$ on the one hand, and to change the set $\Pi$ appearing in \eqref{eq_duality_01}
to the whole space $\mathcal L^2\times\mathcal L^2$ on the other hand.
\begin{lemma}\label{lem_restrict_measures}
Assume there exists some $\xi \in L^1$ such that $\mathcal A(\xi,g,z)\neq\emptyset$. 
Then it is sufficient to consider measures with densities that are bounded away from zero, that is
\begin{equation}\label{eq_dual_strict_pos}
\mathcal E^g_0(\xi) = \sup_{v\in L^\infty_b} \crl{E[v\xi] - \mathcal E_0^*(v)} 
\end{equation}
where $L^\infty_b:=\{v\in L^\infty:\,v>0\quad\mbox{and}\quad \|\frac{1}{v}\|_\infty<\infty\}$. 
\end{lemma}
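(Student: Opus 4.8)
The plan is to derive the restricted representation \eqref{eq_dual_strict_pos} directly from the full Fenchel--Moreau representation \eqref{eq_duality_02}, combined with Lemma \ref{lemma_duality_appendix}, by perturbing an arbitrary dual density toward the reference density $1$ (i.e.\ toward $P$ itself). The easy inequality is ``$\le$'': since $L^\infty_b\subset L^\infty$, one has $\sup_{v\in L^\infty_b}\{E[v\xi]-\mathcal E_0^*(v)\}\le \sup_{v\in L^\infty}\{E[v\xi]-\mathcal E_0^*(v)\}=\mathcal E^g_0(\xi)$ by \eqref{eq_duality_02}, so only the reverse inequality requires work. Throughout I would use that $\mathcal E^*_0$, as a Fenchel conjugate, is convex, and that by Lemma \ref{lemma_duality_appendix} it suffices to consider $v\in L^\infty_+$ with $E[v]=1$ (all other $v$ giving $\mathcal E^*_0(v)=+\infty$).

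For ``$\ge$'', fix $v\in L^\infty_+$ with $E[v]=1$ and $\mathcal E^*_0(v)<+\infty$ (otherwise the corresponding summand is $-\infty$ and there is nothing to prove). For $\varepsilon\in(0,1)$ I would set $v_\varepsilon:=(1-\varepsilon)v+\varepsilon$, which is again a density with $E[v_\varepsilon]=1$, is bounded, and satisfies $v_\varepsilon\ge\varepsilon>0$, so that $\|1/v_\varepsilon\|_\infty\le 1/\varepsilon<\infty$ and hence $v_\varepsilon\in L^\infty_b$. Convexity of $\mathcal E^*_0$ gives $\mathcal E^*_0(v_\varepsilon)\le(1-\varepsilon)\mathcal E^*_0(v)+\varepsilon\,\mathcal E^*_0(1)$, while linearity gives $E[v_\varepsilon\xi]=(1-\varepsilon)E[v\xi]+\varepsilon E[\xi]$. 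Combining these, $E[v_\varepsilon\xi]-\mathcal E^*_0(v_\varepsilon)\ge(1-\varepsilon)\big(E[v\xi]-\mathcal E^*_0(v)\big)+\varepsilon\big(E[\xi]-\mathcal E^*_0(1)\big)$, and letting $\varepsilon\downarrow0$ along $v_\varepsilon\in L^\infty_b$ would yield $\sup_{v\in L^\infty_b}\{E[v\xi]-\mathcal E^*_0(v)\}\ge E[v\xi]-\mathcal E^*_0(v)$; taking the supremum over all admissible $v$ then closes the argument via \eqref{eq_duality_02} and Lemma \ref{lemma_duality_appendix}.

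The one genuinely nontrivial point, and the step I expect to be the main obstacle, is the finiteness $\mathcal E^*_0(1)<+\infty$, which is what keeps the $\varepsilon$-term controlled as $\varepsilon\downarrow0$. Here I would invoke Lemma \ref{lemma1}: for any $\xi\in L^1$ with $\mathcal A(\xi,g,z)\neq\emptyset$ every value process is a supermartingale dominating $E[\xi\mid\mathcal F_\cdot]$, whence $Y_0\ge E[\xi]$ and therefore $\mathcal E^g_0(\xi)\ge E[\xi]$; this inequality holds trivially when $\mathcal A(\xi,g,z)=\emptyset$ since then $\mathcal E^g_0(\xi)=+\infty$. Consequently $\mathcal E^*_0(1)=\sup_{\xi\in L^1}\{E[\xi]-\mathcal E^g_0(\xi)\}\le 0<+\infty$, as required. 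The standing assumption that some $\mathcal A(\xi,g,z)\neq\emptyset$ enters only to guarantee that $\mathcal E^g_0$, and hence $\mathcal E^*_0$, is proper, so that the representation \eqref{eq_duality_02} is the genuine Fenchel--Moreau dual; the rest is routine convexity bookkeeping.
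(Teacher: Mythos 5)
Your proof is correct, and its core strategy --- interpolating an arbitrary admissible density toward the constant density $1$ via $v_\varepsilon=(1-\varepsilon)v+\varepsilon\in L^\infty_b$ and controlling the conjugate by convexity --- is exactly the one used in the paper. Where you genuinely deviate is in the two supporting steps. For the crucial finiteness $\mathcal E^*_0(1)<+\infty$, the paper computes $\mathcal E^*_0(P)$ through the representation \eqref{eq_duality_01} (hence through Lemma \ref{lem_dual00}), invoking \ref{pos} and the martingale property of the integrals $\int Z\,dW$ under $P$ to conclude $\mathcal E^*_0(P)<\infty$; you instead obtain $\mathcal E^g_0(\xi)\ge E[\xi]$ for every $\xi\in L^1$ directly from the supermartingale property in Lemma \ref{lemma1} (trivially also when $\mathcal A(\xi,g,z)=\emptyset$), whence $\mathcal E^*_0(1)\le 0$. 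Your route is more elementary: it does not presuppose the dual machinery built around \eqref{eq_duality_01} and Lemma \ref{lem_dual00}, and it shows that the bound $\mathcal E^*_0(1)\le 0$ holds unconditionally, the hypothesis $\mathcal A(\xi,g,z)\neq\emptyset$ entering only to keep $\mathcal E^g_0$ proper so that $\mathcal E^*_0>-\infty$ and the Fenchel--Moreau representation \eqref{eq_duality_02} is meaningful. Second, the paper establishes the full limit $\lim_{\lambda\downarrow 0}\mathcal E^*_0(v_\lambda)=\mathcal E^*_0(v)$, using convexity for one inequality, lower semicontinuity of $\mathcal E^*_0$ for the other, and dominated convergence for $E[v_\lambda\xi]\to E[v\xi]$; you correctly observe that only the one-sided estimate $\mathcal E^*_0(v_\varepsilon)\le(1-\varepsilon)\mathcal E^*_0(v)+\varepsilon\,\mathcal E^*_0(1)$ is needed, since a lower bound on $E[v_\varepsilon\xi]-\mathcal E^*_0(v_\varepsilon)$ suffices, and the linear term is handled exactly by $E[v_\varepsilon\xi]=(1-\varepsilon)E[v\xi]+\varepsilon E[\xi]$, so neither lower semicontinuity nor dominated convergence is required. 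The paper's variant yields slightly more information (actual convergence of the conjugates along the interpolation, and a description of $\mathcal E^*_0(P)$ consistent with the surrounding duality development in Section \ref{sec023}), while yours is leaner and self-contained.
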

\begin{proof}
The assumption of $\mathcal A(\xi,g,z)$ being non-empty for some $\xi\in L^1$ implies
the existence of $(\Delta,\varGamma)\in\Pi$ and corresponding $Z$ such that $E_P[\int_0^Tg_u(Z_u,\Delta_u,\varGamma_u)du]<\infty$ which
together with \ref{pos}, \eqref{eq_duality_01} 
and the martingale property of all occurring $\int ZdW$ under $P$ immediately yields that $\mathcal E^*_0(P)<\infty$.
For any $Q\ll P$  with $\frac{dQ}{dP}=v\in L^\infty_+$ and $\lambda\in(0,1)$ we define a measure $Q^\lambda$
by its Radon-Nikodym derivative $ v_\lambda:= (1-\lambda)v + \lambda$ where 
naturally $\frac{dP}{dP}=1$.
Observe that $\lambda>0$ implies $v_\lambda\in L^\infty_b$.
Next, we show that $\lim_{\lambda\downarrow 0}\mathcal E^*_0(v_\lambda)=\mathcal E^*_0(v)$.
Indeed, convexity of $\mathcal E^*_0(\cdot)$ together with $\mathcal E^*_0(\frac{dP}{dP})=\mathcal E^*_0(1)<\infty$  
yields $\liminf_{\lambda\downarrow 0}\mathcal E^*_0(v_\lambda)\le\mathcal E^*_0(v)$, 
whereas the reverse inequality is satisfied by means of the lower semicontinuity.
On the other hand, dominated convergence gives $\lim_{\lambda\downarrow 0} E[v_\lambda\xi]=E[v\xi]$, since
$|v_\lambda\xi| \le |v\xi|+|\xi|$ which is integrable.
Consequently, the expression $\{E[v\xi] - \mathcal E^*_0(v)\}$ is the limit of a sequence 
$(E[v_{\lambda_n}\xi] - \mathcal E^*_0(v_{\lambda_n}))_n$ where $(v_{\lambda_n})\subset L^\infty_b$ and $\lambda_n\downarrow 0$.
Since $\mathcal E^g_0(\xi)$ can be expressed as the supremum of $\{E[v\xi] - \mathcal E^*_0(v)\}$ over all $v$, 
it suffices to consider the supremum over $v\in L^\infty_b$, the proof is done.
\end{proof}
\begin{lemma}\label{lem_optimization}
For each $Q\in\mathcal Q$ such that $\frac{dQ}{dP}\in L^\infty_b$
it holds 
\begin{equation}\label{eq_dual_optim}
\mathcal E^*_0 (Q) =  \sup_{(\Delta,\varGamma)\in\mathcal L^2\times\mathcal L^2} \crl{E_Q\left[- \int_0^T  g_u(Z_u,\Delta_u,\varGamma_u)  du + 
         \int_0^T Z_u dW_u\right] }\,.
\end{equation} 
\end{lemma}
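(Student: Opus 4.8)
The plan is to read off the result from the representation \eqref{eq_duality_01} by enlarging the index set from $\Pi$ to all of $\mathcal L^2\times\mathcal L^2$. For $(\Delta,\varGamma)\in\mathcal L^2\times\mathcal L^2$ write $Z=z+\int\Delta\,du+\int\varGamma\,dW$ and set $\eta(\Delta,\varGamma):=-\int_0^T g_u(Z_u,\Delta_u,\varGamma_u)\,du+\int_0^T Z_u\,dW_u$, so that the right-hand side of \eqref{eq_dual_optim} is $S:=\sup_{(\Delta,\varGamma)\in\mathcal L^2\times\mathcal L^2}E_Q[\eta(\Delta,\varGamma)]$. Since $\Pi\subseteq\mathcal L^2\times\mathcal L^2$ by \eqref{eq_Pi} and the functional being optimised is literally the one in \eqref{eq_duality_01}, that representation gives $\mathcal E^*_0(Q)=\sup_{(\Delta,\varGamma)\in\Pi}E_Q[\eta(\Delta,\varGamma)]\le S$ for free. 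Everything therefore reduces to the reverse bound $S\le\mathcal E^*_0(Q)$, for which it suffices to prove $E_Q[\eta(\Delta,\varGamma)]\le\mathcal E^*_0(Q)$ for each individual $(\Delta,\varGamma)\in\mathcal L^2\times\mathcal L^2$.

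So I would fix such a pair, abbreviate $\eta:=\eta(\Delta,\varGamma)$, and first record the integrability facts. As in the proof of Lemma \ref{lemma21}, membership $(\Delta,\varGamma)\in\mathcal L^2\times\mathcal L^2$ forces $Z\in\mathcal L^2$, so $\int Z\,dW$ is a square-integrable $P$-martingale and $Z\in\Theta(z)$; since $dQ/dP$ is bounded, also $\int_0^T Z_u\,dW_u\in L^1(Q)$. If $E_Q[\int_0^T g_u(Z_u,\Delta_u,\varGamma_u)\,du]=+\infty$, then $E_Q[\eta]=-\infty\le\mathcal E^*_0(Q)$ and there is nothing to show, so I may assume this $Q$-expectation is finite, whence $\eta\in L^1(Q)$. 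At this point the hypothesis $dQ/dP\in L^\infty_b$ is used decisively: because the density is bounded away from zero as well as above, $L^1(Q)$ and $L^1(P)$ coincide as sets, so $\eta\in L^1$ and in particular $\eta^-\in L^1$; thus $\eta$ is an admissible test random variable in $\mathcal E^*_0(Q)=\sup_{\xi\in L^1}\{E_Q[\xi]-\mathcal E^g_0(\xi)\}$.

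It then remains to check $\mathcal E^g_0(\eta)\le 0$, which I would obtain by exhibiting an explicit competitor in $\mathcal A(\eta,g,z)$ with zero initial value. Put $Y_t:=-\int_0^t g_u(Z_u,\Delta_u,\varGamma_u)\,du+\int_0^t Z_u\,dW_u$; since $\int_0^T g\,du<\infty$ almost surely (finite $Q$-expectation together with $Q\sim P$) and $\int Z\,dW$ is continuous, $Y\in\mathcal S$, while $Y_T=\eta$ and \eqref{eq03} holds with equalities, so $(Y,Z)\in\mathcal A(\eta,g,z)$ with $Y_0=0$. By the definition \eqref{eq00} of $\mathcal E^g_0$ this gives $\mathcal E^g_0(\eta)\le 0$, hence $\mathcal E^*_0(Q)\ge E_Q[\eta]-\mathcal E^g_0(\eta)\ge E_Q[\eta]$. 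Taking the supremum over $(\Delta,\varGamma)$ yields $S\le\mathcal E^*_0(Q)$ and closes the argument.

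The only genuinely delicate step is the identification $L^1(Q)=L^1(P)$: without it, the natural candidate $\eta$ lives a priori only in $L^1(Q)$ and need not be a legitimate test point for the $L^1(P)$-conjugate $\mathcal E^*_0$, which is exactly why the lemma is stated for densities in $L^\infty_b$ rather than for general bounded densities. The surrounding bookkeeping---disposing of the case where $\int g\,du$ fails to be $Q$-integrable and verifying that $(Y,Z)$ is a bona fide element of $\mathcal A(\eta,g,z)$---is routine once the density is controlled both from above and from below.
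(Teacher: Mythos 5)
Your proof is correct, but the reverse inequality is argued by a genuinely different (and shorter) route than the paper's. Up to the point where you fix a pair $(\Delta,\varGamma)$ with $E_Q[\int_0^T g_u\,du]<\infty$, use the lower bound on $\tfrac{dQ}{dP}$ to get $P$-integrability, and observe that $\bigl(-\int_0^\cdot g_u(Z_u,\Delta_u,\varGamma_u)\,du+\int_0^\cdot Z_u\,dW_u,\,Z\bigr)$ is an element of $\mathcal A(\eta,g,z)$ with zero initial value, the two arguments coincide. At that point the paper does \emph{not} test the conjugate at $\eta$ directly; instead it invokes Theorem \ref{prop1} to obtain a minimal supersolution $(\bar Y,\bar Z)\in\mathcal A(\eta,g,z)$ with $\bar Y_0=\mathcal E^g_0(\eta)\le 0$, runs the construction from Lemma \ref{lem_dual00} on $\bar Z$ to manufacture an actual solution whose decomposition $(\bar\Delta,\bar\varGamma)$ belongs to $\Pi$ and whose terminal value dominates $\eta$ pathwise, and then concludes through the $\Pi$-representation \eqref{eq_duality_01}. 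You bypass all of this: since $\eta\in L^1$ and $\mathcal E^g_0(\eta)\le 0$ from the explicit competitor, the definition \eqref{eq_duality_03a} immediately gives $\mathcal E^*_0(Q)\ge E_Q[\eta]-\mathcal E^g_0(\eta)\ge E_Q[\eta]$. Your version is more elementary — it needs neither the existence theorem nor the solution-construction trick for this step — and it isolates correctly why $L^\infty_b$ (density bounded away from zero) is the right hypothesis. What the paper's longer detour buys is constructive information: for every admissible $(\Delta,\varGamma)$ it exhibits an element of $\Pi$ whose objective value is at least as large, so the supremum over all of $\mathcal L^2\times\mathcal L^2$ is seen to be exhausted by $\Pi$ itself; this is the form of the statement that feeds into Proposition \ref{lem_dual01}, where the maximizer is asserted to lie in $\Pi$. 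Since both arguments rest on \eqref{eq_duality_01} for the easy inequality, neither is more self-contained in that respect.
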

\begin{proof}
Since $\Pi$ defined in \eqref{eq_Pi} is a subset of $\mathcal L^2\times\mathcal L^2$, ``$\le$'' certainly holds in \eqref{eq_dual_optim}. 
As to the reverse inequality, observe first that, since we consider a supremum in \eqref{eq_dual_optim} and $Z\in \mathcal L^2$ 
whenever $(\Delta,\varGamma)\in\mathcal L^2\times\mathcal L^2$, 
those $(\Delta,\varGamma)$ such that $E_Q[\int_0^Tg_u(Z_u,\Delta_u,\varGamma_u)du]=+\infty$ can be neglected in the following.
In particular, since $v=\frac{dQ}{dP}\in L^\infty_b$, we can restrict our focus to those elements satisfying 
$E[\int_0^Tg_u(Z_u,\Delta_u,\varGamma_u)du]\le
\|\frac{1}{v}\|_{L^{\infty}}E_Q[\int_0^Tg_u(Z_u,\Delta_u,\varGamma_u)du]<+\infty$.
Thus, given such a pair $(\Delta,\varGamma)$, the terminal condition 
$\xi:=  -\int_0^Tg_u(Z_u,\Delta_u,\varGamma_u)du + \int_0^TZ_udW_u$
fulfills $\xi^-\in L^1$ due to the martingale property of $\int Z dW$.
Furthermore, the pair $(-\int_0^\cdot g_u(Z_u,\Delta_u,\varGamma_u)du + \int_0^\cdot Z_udW_u,Z)$ is an element of $\mathcal A(\xi,g,z)$ 
by construction and hence Theorem \ref{prop1} yields the existence
of $(\bar Y,\bar Z)\in \mathcal A(\xi,g,z)$ satisfying $\bar Y_0 = \mathcal E^g_0(\xi)\le 0$.
Now, using the same techniques as in the proof of Lemma \ref{lem_dual00}, we define $Y^1$ by
$Y^1_t := \mathcal E^g_0(\xi) -\int_0^tg_u(\bar Z_u,\bar\Delta_u,\bar\varGamma_u)du + \int_0^t \bar Z_u dW_u$, for all $t\in[0,T]$,
where $(\bar\Delta,\bar\varGamma)$ is the decomposition of $\bar Z$, and obtain that 
$Y^1_T\ge\xi$ as well as $\mathcal E^g_0(Y^1_T)=\mathcal E^g_0(\xi)$.
Consequently,
\begin{multline}\label{eq_optim1}
-\int_0^T g_u(\bar Z_u,\bar\Delta_u,\bar\varGamma_u)du + \int_0^T \bar Z_u dW_u \\= Y^1_T -\mathcal E^g_0(\xi) \ge Y^1_T 
\ge \xi = -\int_0^Tg_u(Z_u,\Delta_u,\varGamma_u)du + \int_0^T Z_u dW_u \,,
\end{multline}
which, by taking expectation under $Q$ in \eqref{eq_optim1} and using $(\bar\Delta,\bar\varGamma)\in\Pi$, implies
\begin{equation}\label{eq_optim2}
\mathcal E^*_0(Q)\ge E_Q\edg{-\int_0^Tg_u(Z_u,\Delta_u,\varGamma_u)du + \int_0^T Z_u dW_u}\,.
\end{equation}
Since $(\Delta,\varGamma)$ was arbitrary, we have finally shown that $\mathcal E^*_0(Q)$ is greater or equal to 
the supremum over $(\Delta,\varGamma)\in\mathcal L^2\times\mathcal L^2$
of the right-hand side of \eqref{eq_optim2}, 
which finishes the proof.
\end{proof}
The ensuing proposition provides, for a given measure $Q\in\mathcal Q$ with $\frac{dQ}{dP}\in L^\infty_b$, the existence of a pair of processes attaining 
the supremum in \eqref{eq_duality_01}.
\begin{proposition}\label{lem_dual01}
For each $Q\in\mathcal Q$ with $\frac{dQ}{dP}\in L^\infty_b$ there exist $(\Delta^Q,\varGamma^Q)\in\Pi$ and a corresponding control $Z^Q$ of the
form $Z^Q = z + \int \Delta^Q du + \int \varGamma^Q dW$ such that 
\begin{equation}\label{eq_attaining}
\mathcal E^*_0 (Q) 
= E_Q\left[ -\int_0^T g_u(Z^Q_u,\Delta^Q_u,\varGamma^Q_u) du  + \int_0^T Z^Q_udW_u \right]\,.
\end{equation}
Furthermore, if the convexity of $g$ is strict, then the triple $(Z^Q,\Delta^Q,\varGamma^Q)$ is unique.
\end{proposition}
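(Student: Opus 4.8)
The plan is to obtain the maximizer by a direct method: take a maximizing sequence, extract a limit using the compactness machinery of Lemma~\ref{L2compact}, and verify that the limit attains the supremum via lower semicontinuity of the objective, finally concluding uniqueness from strict convexity. Fix $Q\in\mathcal Q$ with $\frac{dQ}{dP}\in L^\infty_b$. By Lemma~\ref{lem_optimization}, we may compute $\mathcal E^*_0(Q)$ as the supremum over all $(\Delta,\varGamma)\in\mathcal L^2\times\mathcal L^2$ of the functional $F(\Delta,\varGamma):=E_Q[-\int_0^T g_u(Z_u,\Delta_u,\varGamma_u)\,du + \int_0^T Z_u\,dW_u]$, where $Z=z+\int\Delta\,du+\int\varGamma\,dW$. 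Let $((\Delta^n,\varGamma^n))$ be a maximizing sequence, so that $F(\Delta^n,\varGamma^n)\to\mathcal E^*_0(Q)$. As noted in the proof of Lemma~\ref{lem_optimization}, we may discard pairs for which $E_Q[\int_0^T g_u\,du]=+\infty$, and since $v\in L^\infty_b$ this yields a uniform bound $E[\int_0^T g_u(Z^n_u,\Delta^n_u,\varGamma^n_u)\,du]\le C$ along the sequence. Via \ref{dgc}, exactly as in the estimate in Lemma~\ref{L2compact}, this forces $\sup_n(\norm{\Delta^n}^2_{\mathcal L^2}+\norm{\varGamma^n}^2_{\mathcal L^2})<\infty$.

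Next I would invoke the compactness argument of Lemma~\ref{lemma21} and Lemma~\ref{L2compact}: passing to a sequence $((\tilde\Delta^n,\tilde\varGamma^n))$ in the asymptotic convex hull, we obtain $\mathcal L^2$-convergence $\tilde\Delta^n\to\Delta^Q$, $\tilde\varGamma^n\to\varGamma^Q$, hence $\mathcal L^2$-convergence of the associated controls $\tilde Z^n\to Z^Q=z+\int\Delta^Q\,du+\int\varGamma^Q\,dW\in\Theta(z)$, together with $P\otimes dt$-almost-everywhere convergence of $((\tilde Z^n,\tilde\Delta^n,\tilde\varGamma^n))$ to $(Z^Q,\Delta^Q,\varGamma^Q)$ along a subsequence. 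The Brownian integral term is continuous under $\mathcal L^2$-convergence and the change of measure to $Q$ is harmless because $\frac{dQ}{dP}\in L^\infty$; for the generator term, \ref{lsc} together with \ref{pos} and Fatou's lemma gives $\liminf_n E_Q[\int_0^T g_u(\tilde Z^n_u,\tilde\Delta^n_u,\tilde\varGamma^n_u)\,du]\ge E_Q[\int_0^T g_u(Z^Q_u,\Delta^Q_u,\varGamma^Q_u)\,du]$. Consequently $F(\Delta^Q,\varGamma^Q)\ge\limsup_n F(\tilde\Delta^n,\tilde\varGamma^n)$. Since passing to the asymptotic convex hull cannot decrease a concave objective---and here $F$ is concave in $(\Delta,\varGamma)$ because $g$ is convex by \ref{con} and the integral terms are linear---the convex combinations $(\tilde\Delta^n,\tilde\varGamma^n)$ remain maximizing, so $F(\Delta^Q,\varGamma^Q)\ge\mathcal E^*_0(Q)$, while the reverse inequality holds trivially. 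Thus the supremum is attained at $(\Delta^Q,\varGamma^Q)$, which I would then show lies in $\Pi$ by the same construction used in Lemma~\ref{lem_optimization}: the pair defines a terminal condition $\xi^Q:=-\int_0^T g_u(Z^Q_u,\Delta^Q_u,\varGamma^Q_u)\,du+\int_0^T Z^Q_u\,dW_u$ whose minimal supersolution is in fact a genuine solution starting at $\mathcal E^g_0(\xi^Q)$, yielding \eqref{eq_attaining}.

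For uniqueness under strict convexity, suppose $(Z^1,\Delta^1,\varGamma^1)$ and $(Z^2,\Delta^2,\varGamma^2)$ both attain the supremum. Their midpoint $(\Delta,\varGamma):=\tfrac12(\Delta^1+\Delta^2,\varGamma^1+\varGamma^2)$ gives a control $Z=\tfrac12(Z^1+Z^2)$ by linearity of the It\^o decomposition, and strict convexity of $g$ in $(z,\delta,\gamma)$ forces a strict inequality $g_u(Z_u,\Delta_u,\varGamma_u)<\tfrac12 g_u(Z^1_u,\Delta^1_u,\varGamma^1_u)+\tfrac12 g_u(Z^2_u,\Delta^2_u,\varGamma^2_u)$ on the set where the two triples differ; integrating against $Q$ and using that the Brownian-integral terms average linearly would make $F(\Delta,\varGamma)$ strictly exceed the common maximal value unless that set is $P\otimes dt$-null, a contradiction. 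Hence $(Z^Q,\Delta^Q,\varGamma^Q)$ is unique up to indistinguishability.

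I expect the main obstacle to be the lower-semicontinuity step for the generator term under the combined change of measure and convex-hull limiting: one must be careful that the $P\otimes dt$-a.e.\ convergence survives passage to the asymptotic convex hull (handled by a further subsequence as in Lemma~\ref{L2compact}) and that the positivity \ref{pos} legitimately permits Fatou's lemma under $Q$ rather than $P$, which is where the bound $\frac{dQ}{dP}\in L^\infty$ is essential.
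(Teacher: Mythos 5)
Your overall strategy---maximizing sequence, convex-hull compactness, lower semicontinuity plus concavity to pass to the limit, strict convexity for uniqueness---is the same as the paper's, but there is a genuine gap at precisely the step the paper works hardest on: the uniform $\mathcal L^2$-bound on the maximizing sequence. You claim that because pairs with $E_Q[\int_0^T g_u\,du]=+\infty$ may be discarded and $\tfrac{dQ}{dP}\in L^\infty_b$, one obtains a \emph{uniform} bound $E[\int_0^T g_u(Z^n_u,\Delta^n_u,\varGamma^n_u)\,du]\le C$ along the sequence. That does not follow: discarding gives finiteness for each $n$ separately, and the maximizing property does not cap $E_Q[\int_0^T g_u\,du]$, because under $Q$ the expectation $E_Q[\int_0^T Z^n_u\,dW_u]$ is \emph{not} zero ($\int Z^n dW$ is a $P$-martingale, not a $Q$-martingale). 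By Girsanov it equals
\begin{equation*}
E_Q\edg{\int_0^T q_u\int_0^u\brak{\Delta^n_s+q_s\varGamma^n_s}ds\,du} + zE_Q\edg{\int_0^T q_u\,du}\,,
\end{equation*}
which grows with $(\Delta^n,\varGamma^n)$; a priori the generator term and this drift term could blow up together while their difference stays bounded, and nothing in your argument excludes this. Note also that the estimate of Lemma \ref{L2compact} which you invoke ``exactly as in'' is derived from the supermartingale property of $\int Z^n dW$ under $P$, which is exactly what is unavailable here.

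This is why the paper's proof has its Steps 1--2: it first shows $\int q\,dW\in BMO$ (via the Muckenhaupt condition, using that the density is bounded away from zero), hence $q\in\mathcal L^4$, and then uses H\"older's inequality to show that the Girsanov drift term grows at most \emph{linearly} in $\norm{\Delta^n}_{\mathcal L^2}$ and $\norm{\varGamma^n}_{\mathcal L^2}$, whereas by \ref{dgc} the generator term grows \emph{quadratically}; comparing the two rates forces boundedness. Your argument can be repaired by inserting exactly this: \ref{dgc} and H\"older give $F(\Delta,\varGamma)\le -c_2\brak{\norm{\Delta}^2_{\mathcal L^2(Q)}+\norm{\varGamma}^2_{\mathcal L^2(Q)}} + C\brak{1+\norm{\Delta}_{\mathcal L^2}+\norm{\varGamma}_{\mathcal L^2}}$ with $C$ depending on $\norm{q}_{\mathcal L^2}$, $\norm{q}_{\mathcal L^4}$ and $z$, so $F$ being bounded below along the sequence bounds the norms---but the finiteness of $\norm{q}_{\mathcal L^4}$ requires the BMO step you omitted entirely. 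The rest of your proposal (Fatou under $Q$ via \ref{pos} and \ref{lsc}; continuity of the Brownian-integral term under $\mathcal L^2$-convergence since $\tfrac{dQ}{dP}\in L^\infty$; convex combinations remaining maximizing by concavity; the midpoint argument for uniqueness; membership in $\Pi$ via the construction of Lemma \ref{lem_optimization}) matches the paper's Steps 3--5 and is sound. Incidentally, your closing paragraph identifies the lower-semicontinuity step as the main obstacle; it is not---the a priori bound is.
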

\begin{proof}
{\it Step 1: The integral $\int qdW$ is an element of $BMO$.}
We begin by proving that, for $Q\in\mathcal Q$ the density $\frac{dQ}{dP}=\exp(\int_0^T q_udW_u - \frac{1}{2} \int_0^T |q_u|^2 du)$
of which belongs to $L^\infty_b$, 
the process $(\int_0^tq_udW_u)_{t\in[0,T]}$ is an element of $BMO$.
Indeed, since the process $v_t:= E[\frac{dQ}{dP}\,|\,\mathcal F_t]$ is uniformly bounded away from zero,
it satisfies the Muckenhaupt $(A_1)$ condition, see \citep[Definition 2.2]{Kazamaki01}, and therefore $\int q dW \in BMO$ by means of
\citep[Theorem 2.4]{Kazamaki01}.\\

\noindent\emph{Step 2: $\mathcal L^2$-boundedness of a minimizing sequence and the candidate $(\Delta^Q,\varGamma^Q)$.}
Since the generator $g$ satisfies \ref{dgc}, it holds for all $(\Delta,\varGamma,Z)$ that
\begin{equation}\label{eq_g_quad}
\norm{\Delta}^2_{\mathcal L^2(Q)} + \norm{\varGamma}^2_{\mathcal L^2(Q)} \le \frac{1}{c_2} 
\brak{ E_Q\edg{\int_0^T g_u(Z_u,\Delta_u,\varGamma_u)du} - c_1 T}\,.
\end{equation}
If we put $F(Z,\Delta,\varGamma) := E_Q[ \int_0^T (g_u(Z_u,\Delta_u,\varGamma_u)du  - q_u\int_0^u(\Delta_s + q_s\varGamma_s)ds) du]$,  
then \eqref{eq_duality_05} in combination with Lemma \ref{lem_optimization} implies that the conjugate can be expressed
by $\mathcal E^*_0 (Q) = -\inf_{(\Delta,\varGamma)\in\mathcal L^2\times\mathcal L^2} F(Z,\Delta,\varGamma) + zE_Q[\int_0^Tq_udu]$. 
We claim that, for $(Z^n,\Delta^n,\varGamma^n)$ a minimizing sequence of $F$, both $(\Delta^n)$ and $(\varGamma^n)$ are bounded in $\mathcal L^2(Q)$.
Since in our case the $\mathcal L^2$-norms with respect to $P$ and $Q$ are equivalent, we suppress the dependence on
the measure in the notation to follow.
Assume now contrary to our assertion that $\|\Delta^n\|^2_{\mathcal L^2} \rightarrow\infty$ and
$\|\varGamma^n\|^2_{\mathcal L^2} \rightarrow\infty$
as $n$ tends to infinity.
This in turn would imply either 
\begin{equation}\label{eq_same_order1}
E_Q \edg{ \int_0^T  q_u\int^u_0\Delta^n_s ds\,du}\rightarrow \infty\qquad\mbox{and}\qquad
\limsup_n\frac{\norm{\Delta^n}_{\mathcal L^2}^2}{E_Q\edg{\int_0^T  q_u\int^u_0\Delta^n_s ds\,du}} = K 
\end{equation}
or 
\begin{equation}\label{eq_same_order2}
E_Q\edg{ \int_0^T  q_u\int^u_0q_s\varGamma^n_s ds\,du}\rightarrow \infty \,\,\quad\mbox{and}\quad\,\,
\limsup_n\frac{\norm{\varGamma^n}_{\mathcal L^2}^2}{E_Q\edg{ \int_0^T  q_u\int^u_0q_s\varGamma^n_s ds\,du}} = L
\end{equation}
or both, for $K,L\in\R$.
Indeed, \eqref{eq_g_quad} would otherwise lead to $\lim_n F(Z^n,\Delta^n,\varGamma^n)=\infty$ 
and thereby contradict $((Z^n,\Delta^n,\varGamma^n))$ being a minimizing sequence of $F$.
On the other hand however, an application of H\"older's inequality 
yields
\begin{multline}\label{eq_hoelder}
\abs{E_Q\edg{\int_0^T q_u \int_0^u\Delta^n_sds\,du}} 
\le  \brak{E_Q\edg{\int_0^T |q_u|^2 du}E_Q\edg{ \int_0^T \brak{\int_0^u\Delta^n_sds}^2 du}}^{\frac{1}{2}} \\
\le \norm{q}_{\mathcal L^2}\brak{ E_Q\edg{\int_0^T \int_0^u|\Delta^n_s|^2ds du}}^{\frac{1}{2}} 
\le T^{\frac{1}{2}} \norm{q}_{\mathcal L^2} \norm{\Delta^n}_{\mathcal L^2}\,.
\end{multline}
Taking the square on both sides above we obtain
\begin{equation*}
\brak{E_Q\left[\int_0^T q_u\int_0^u\Delta^n_s ds\,du\right]}^2
\le T \norm{q}_{\mathcal L^2}^2 \norm{\Delta^n}_{\mathcal L^2}^2
\end{equation*}
which in turn implies $\norm{\Delta^n}^2_{\mathcal L^2}(E_Q[ \int_0^T  q_u\int^u_0\Delta^n_s ds\,du])^{-1} \to \infty$, 
a contradiction to \eqref{eq_same_order1}.
As to $(\varGamma^n)$, we argue similarly and, for $(Q_u)_{u\in[0,T]}$ defined by $Q_u:=\int_u^Tq_sds$ estimate 
\begin{multline*}
\abs{E_Q\edg{\int_0^T q_u \int_0^u q_s\varGamma^n_sds\,du}} = \abs{ E_Q\left[ \int_0^Tq_u\varGamma^n_uQ_u du\right] }
\le E_Q\left[ \int_0^T|q_u||\varGamma^n_u||Q_u| du\right]\\
\le  \brak{E_Q\edg{\int_0^T |q_u|^2|Q_u|^2 du}}^{\frac{1}{2}}\|\varGamma^n\|_{\mathcal L^2}
=  \brak{E_Q\edg{\int_0^T |q_u|^2\bigg|\int_u^Tq_sds\bigg|^2 du}}^{\frac{1}{2}} \|\varGamma^n\|_{\mathcal L^2}\\
\le \brak{E_Q\edg{\brak{\int_0^T |q_u|^2du}^2}}^{\frac{1}{2}} \|\varGamma^n\|_{\mathcal L^2}
= \norm{q}_{\mathcal L^4}^2 \|\varGamma^n\|_{\mathcal L^2}\,,
\end{multline*}
where we used Fubini's theorem in the first equality above.
Since $\int qdW\in BMO$, the $\mathcal L^4$-norm of $q$ is finite\footnote{Recall that $BMO$
can be embedded into any $\mathcal H^p$-space , compare \citep[Section 2.1, p.~26]{Kazamaki01}.}
and the contradiction to \eqref{eq_same_order2}
is derived analogously to the argumentation 
following \eqref{eq_hoelder}.
Consequently, there exists a sequence $((\tilde\Delta^n,\tilde\varGamma^n))$ in the asymptotic convex hull of $((\Delta^n,\varGamma^n))$
and $(\Delta^Q,\varGamma^Q)\in\mathcal L^2\times\mathcal L^2$ such that 
$((\tilde\Delta^n,\tilde\varGamma^n))$ converges in $\mathcal L^2\times \mathcal L^2$ to $(\Delta^Q,\varGamma^Q)$.
On the side of $(Z^n)$ we pass to the corresponding sequence $(\tilde Z^n)$ and recall from the proof of
Lemma \ref{lemma21} that it is bounded in $\mathcal L^2$. 
Hence, there is a sequence in the asymptotic convex hull of $(\tilde Z^n)$, denoted likewise, 
that converges in $\mathcal L^2$ to some $Z^Q=z +\int \Delta^Qdu + \int \varGamma^QdW$. 
Of course, we pass the corresponding sequence on the side of $((\tilde\Delta^n,\tilde\varGamma^n))$ without violating the
convergence to $(\Delta^Q,\varGamma^Q)$.\\

\noindent\emph{Step 3: Lower Semicontinuity and convexity of $F$.}
In a next step we show that the earlier defined function $F(Z,\Delta,\varGamma) = E_Q[\int_0^T g_u(Z_u,\Delta_u,\varGamma_u) du - \int_0^T Z_udW_u]$
is lower semicontinuous and convex on $\mathcal L^2\times\mathcal L^2\times \mathcal L^2$ where $Z= z+\int \Delta du + \int \varGamma dW$.
Indeed, the part $E_Q[ \int_0^T g_u(Z_u,\Delta_u,\varGamma_u) du]$ is lower semicontinuous by \ref{pos}, \ref{lsc} and 
Fatou's lemma.
As to the second part, first 
observe that $\mathcal L^2$-convergence of 
$(\tilde Z^n)$ towards $Z^Q$ 
implies 
\begin{equation*}
 \abs{E_Q\edg{\int_0^T (\tilde Z^n_u- Z^Q_u)dW_u}} \underset{n\to\infty}{\longrightarrow} 0\,.
\end{equation*}
Furthermore, 
\ref{con} yields that $F$ is convex in $(Z, \Delta,\varGamma)$.\\

\noindent\emph{Step 4: Minimality of $(Z^Q,\Delta^Q,\varGamma^Q)$.}
We claim that $F(Z^Q,\Delta^Q,\varGamma^Q)=\inf_{(\Delta,\varGamma)\in\mathcal L^2\times\mathcal L^2}$ $F(Z,\Delta,\varGamma)$
which would then in turn finally imply \eqref{eq_attaining}.
To this end, it suffices to prove that $F(Z^Q,\Delta^Q,\varGamma^Q)\le\inf_{(\Delta,\varGamma)\in\mathcal L^2\times\mathcal L^2}F(Z,\Delta,\varGamma)$,
since the reverse inequality is naturally satisfied.
Observe now that
\begin{multline*}
\inf_{(\Delta,\varGamma)\in\mathcal L^2\times\mathcal L^2}F(Z,\Delta,\varGamma) = \lim_n F\brak{Z^n,\Delta^n,\varGamma^n} 
= \lim_n \sum_{k=n}^{M(n)}\lambda^{(n)}_k F\brak{Z^k,\Delta^k,\varGamma^k}\\
\ge \lim_n F\brak{\sum_{k=n}^{M(n)}\lambda^{(n)}_k Z^k,\sum_{k=n}^{M(n)}\lambda^{(n)}_k \Delta^k,\sum_{k=n}^{M(n)}\lambda^{(n)}_k\varGamma^k}\\
= \lim_n F\brak{\tilde Z^n,\tilde\Delta^n,\tilde\varGamma^n} \ge F(Z^Q,\Delta^Q,\varGamma^Q)
\end{multline*}
where we denoted by $\lambda^{(n)}_k$, $n\le k\le M(n)$, $\sum_k \lambda^{(n)}_k=1$ the convex weights of the sequence 
$((\tilde Z^n,\tilde\Delta^n,\tilde\varGamma^n))$ 
and made use of the convexity and lower semicontinuity of $F$.\\ 

\noindent\emph{Step 5: Uniqueness of $(Z^Q,\Delta^Q,\varGamma^Q)$.}
As to the uniqueness, assume that there are $(\Delta^1,\varGamma^1)$ and $(\Delta^2,\varGamma^2)$ with corresponding
$Z^1$ and $Z^2$, respectively, both attaining the supremum such that 
$P\otimes dt [(\Delta^1,\varGamma^1)\neq(\Delta^2,\varGamma^2)] >0$.
Setting $(\bar Z,\bar\Delta,\bar\varGamma):=\frac{1}{2}[(Z^1,\Delta^1,\varGamma^1)+(Z^2,\Delta^2,\varGamma^2)]$ together 
with $Q\sim P$ and the strict convexity of $F$
inherited by $g$ yields that 
$F(\bar Z,\bar\Delta,\bar\varGamma)<F(Z^1,\Delta^1,\varGamma^1)$, a contradiction to the optimality of $(Z^1,\Delta^1,\varGamma^1)$.  
\end{proof}
\begin{remark}
Since for a given $Q\in\mathcal Q$ with $\frac{dQ}{dP}\in L^\infty_b$ and a strictly convex generator the maximizer $(Z^Q,\Delta^Q,\varGamma^Q)$ 
is unique by the preceding proposition, it has to be (conditionally) optimal at all times $t\in[0,T]$.
Indeed, assume to the contrary the existence of $(\Delta,\varGamma)$ such that $z+\int_0^t\Delta_udu + \int_0^t\varGamma_udW_u = Z_t=Z^Q_t$ and
$E_Q[\int_t^T -g_u(Z_u,\Delta_u,\varGamma_u) du  + \int_t^T Z_u dW_u | \mathcal F_t] 
> E_Q[ \int_t^T- g_u(Z^Q_u,\Delta^Q_u,\varGamma^Q_u)du + \int_t^TZ^Q_u dW_u |\mathcal F_t]$ 
holds true for some $t\in[0,T]$.
Then, however, the concatenated processes $(\bar Z,\bar\Delta,\bar\varGamma):=(Z^Q,\Delta^Q,\varGamma^Q) 1_{[0,t]} + (Z,\Delta,\varGamma) 1_{]t,T]}$ 
satisfy 
$E_Q[\int_0^T -g_u(\bar Z_u,\bar\Delta_u,\bar\varGamma_u) du  + \int_0^T \bar Z_u dW_u ] 
> E_Q[ \int_0^T- g_u(Z^Q_u,\Delta^Q_u,\varGamma^Q_u)du + \int_0^T Z^Q_u dW_u ]$, which is a contradiction to the opimality of $(Z^Q,\Delta^Q,\varGamma^Q)$ 
at time zero. 
\end{remark}
Notice that, for $d=1$ and the case of a quadratic generator which is in addition independent of $z$, that is 
$g_u(\delta,\gamma)=|\delta|^2+|\gamma|^2$, 
the processes $(\Delta^Q,\varGamma^Q)$ attaining $\mathcal E^*_0(Q)$ can be explicitly computed and $(\Delta^Q_t,\varGamma^Q_t)$ depends 
on the whole path of $q$ up to time $t$, as illustrated in the following proposition.
It thus constitutes a useful tool for the characterization of the dual optimizers 
and its proof is closely related to the Euler-Lagrange equation arising in classical calculus of variation. 
\begin{proposition}\label{lem_app_EL}
Assume that $d=1$ and that $g$ is defined by $g_u(\delta,\gamma)=|\delta|^2+|\gamma|^2$.
For $Q\in\mathcal Q$ with $\frac{dQ}{dP}\in L^\infty_b$, let $(\Delta^Q,\varGamma^Q)$ 
be the optimizer attaining $\mathcal E^*_0(Q)$.
Then there exist $c_1,c_2\in\R$ such that
\begin{align}
& \Delta^Q_t = -\frac{1}{2}\int_0^t q_s ds + c_1 \label{eq_opt_delta}\\
& \varGamma^Q_t =  -\frac{1}{2}q_t\brak{\int_0^t q_s ds+c_2 }\label{eq_opt_gamma}\,,
\end{align}
for all $t\in[0,T]$. 
\end{proposition}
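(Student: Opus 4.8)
The plan is to pin down the unique optimizer guaranteed by Proposition~\ref{lem_dual01} by writing out and solving the Euler--Lagrange equations of the minimization problem behind $\mathcal E^*_0(Q)$. Since $g_u(\delta,\gamma)=\abs{\delta}^2+\abs{\gamma}^2$ is strictly convex, Proposition~\ref{lem_dual01} already supplies a unique minimizer $(\Delta^Q,\varGamma^Q)$; hence it suffices to produce a pair satisfying the first-order stationarity condition, strict convexity then forcing it to be that minimizer. By Lemma~\ref{lem_optimization} together with \eqref{eq_duality_05}, computing $\mathcal E^*_0(Q)$ amounts, up to the additive constant $zE_Q[\int_0^Tq_u\,du]$, to minimizing over $(\Delta,\varGamma)\in\mathcal L^2\times\mathcal L^2$ the functional
\[
F(\Delta,\varGamma)=E_Q\edg{\int_0^T\brak{\abs{\Delta_u}^2+\abs{\varGamma_u}^2-q_u\int_0^u(\Delta_s+q_s\varGamma_s)\,ds}du}\,,
\]
which is the present specialization of the functional appearing in the proof of Proposition~\ref{lem_dual01}.

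First I would compute the first variation. For perturbations $h,k\in\mathcal L^2$ I differentiate $\varepsilon\mapsto F(\Delta+\varepsilon h,\varGamma+\varepsilon k)$ at $\varepsilon=0$ to obtain
\[
E_Q\edg{\int_0^T\brak{2\Delta_u h_u+2\varGamma_u k_u-q_u\int_0^u(h_s+q_s k_s)\,ds}du}\,.
\]
The decisive manipulation is Fubini's theorem applied to the double integral, which rewrites $\int_0^T q_u\int_0^u(h_s+q_s k_s)\,ds\,du$ as $\int_0^T (h_s+q_s k_s)\brak{\int_s^T q_u\,du}\,ds$, thereby localizing the variation and allowing me to read off the integrands. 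Setting the variation to zero then yields the Euler--Lagrange relations
\[
\Delta^Q_u=\frac{1}{2}\int_u^T q_s\,ds\,,\qquad \varGamma^Q_u=\frac{1}{2}\,q_u\int_u^T q_s\,ds\,.
\]

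Finally, writing $\int_u^T q_s\,ds=\int_0^T q_s\,ds-\int_0^u q_s\,ds$ and setting $c_1:=\tfrac{1}{2}\int_0^T q_s\,ds$ and $c_2:=-\int_0^T q_s\,ds$ recasts these identities into the announced forms \eqref{eq_opt_delta}--\eqref{eq_opt_gamma}. To close the argument I would check admissibility, namely $(\Delta^Q,\varGamma^Q)\in\mathcal L^2\times\mathcal L^2$, which follows from $\int q\,dW\in BMO$ and the consequent $\mathcal L^p$-integrability of $q$ established in Step~1 of the proof of Proposition~\ref{lem_dual01}, and then invoke the strict convexity of $F$ to conclude that this stationary pair is the unique minimizer, hence the optimizer attaining $\mathcal E^*_0(Q)$.

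The hard part will be the variational step, and in particular its interplay with measurability: the perturbations $h,k$ must themselves belong to $\mathcal L^2$, so a priori the vanishing first variation only delivers stationarity in $\mathcal F_u$-conditional form, and passing to the pointwise closed form above is exactly the content of the classical Euler--Lagrange reduction. Care is therefore needed both to justify the Fubini interchange, for which the integrability $q\in\mathcal L^p$ is again the relevant input, and to argue that the stationary pair is a genuine minimizer rather than a mere critical point; the latter is precisely what the strict convexity and uniqueness furnished by Proposition~\ref{lem_dual01} provide.
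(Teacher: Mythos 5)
There is a genuine gap, and it sits exactly at the point you flagged and then waved away: adaptedness. In this paper $\mathcal L^2$ consists of \emph{progressively measurable} processes, so both the perturbations and the candidate optimizer must be adapted. Your Fubini step is fine, but the conclusion you draw from it is not. Since $h,k$ range only over adapted processes, what vanishes is $E_Q[\int_0^T h_s(2\Delta_s-\int_s^T q_u\,du)\,ds]$ for all adapted $h$ (and similarly for $k$), and this yields only the projected stationarity condition $2\Delta^Q_s=E_Q[\int_s^T q_u\,du \mid \mathcal F_s]$, $2\varGamma^Q_s=q_sE_Q[\int_s^T q_u\,du\mid\mathcal F_s]$ --- not the pointwise identities you state. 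The pair you exhibit, $\Delta^Q_u=\frac12\int_u^T q_s\,ds$ and $\varGamma^Q_u=\frac12 q_u\int_u^T q_s\,ds$, is anticipating ($\int_u^T q_s\,ds$ is not $\mathcal F_u$-measurable), hence does not belong to $\mathcal L^2\times\mathcal L^2$ at all and cannot be ``the optimizer'': strict convexity identifies a \emph{feasible} stationary point with the unique minimizer, but your stationary point is infeasible, and no appeal to $q\in\mathcal L^4$ repairs measurability. The same error propagates to your constants: $c_1=\frac12\int_0^T q_s\,ds$ and $c_2=-\int_0^T q_s\,ds$ are $\mathcal F_T$-measurable random variables, whereas the proposition asserts, and needs, $c_1,c_2\in\R$. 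There is no ``classical Euler--Lagrange reduction'' that converts the conditional form into your anticipating pointwise form; that passage is precisely where the argument breaks.

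The paper's proof is structured to avoid this trap. It argues $\omega$-wise, writes $J_1(\Delta)=\int_0^T L(u,X(u),X'(u))\,du$ with $X=\int_0^\cdot\Delta_s\,ds$, and --- crucially --- uses perturbations vanishing at \emph{both} endpoints, $\eta(0)=\eta(T)=0$. This fixed-endpoint variation never produces the transversality (free right endpoint) solution that your free variation gives; it yields only the du Bois-Reymond-type conclusion that $2\Delta_u+\int_0^u q_s\,ds$ is constant in $u$ along each path, i.e.\ equals some random variable $M(\omega)$. The deterministic nature of this constant is then obtained by a separate measurability argument: since $\Delta$ is adapted, $M$ coincides with an $\mathcal F_u$-measurable random variable for almost every $u$, so taking $u_n\downarrow 0$ it is $\bigcap_n\mathcal F_{u_n}$-measurable, hence $\mathcal F_0$-measurable and therefore a constant, by right-continuity of the filtration. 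In other words, adaptedness is the engine that pins down $c_1$ and $c_2$ as real numbers in the paper's proof, whereas in your proposal it is the constraint your candidate violates.
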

\begin{proof}
For the purpose of this proof we assume without loss of generality that $z=0$, since the
initial value does not affect the optimization with respect to $(\Delta,\varGamma)$.
Hence, the generator only depending on $\delta$ and $\gamma$ in combination with \eqref{eq_duality_05} 
gives $\mathcal E^*_0(Q) = -\inf_{(\Delta,\varGamma)} \{F_1(\Delta) + F_2(\varGamma)\}$
where $F_1(\Delta):=E_Q[\int_0^T(|\Delta_u|^2 - q_u\int_0^u\Delta_sds)du]$ 
and $F_2(\varGamma):=E_Q[\int_0^T(|\varGamma_u|^2 - q_u\int_0^uq_s\varGamma_sds)du]$.
We will proceed along an $\omega$-wise criterion of optimality, since any pair $(\Delta^Q,\varGamma^Q)$ that is optimal for almost all
$\omega\in\Omega$ then naturally also optimizes the expectation under $Q$.
The uniqueness obtained in Proposition \ref{lem_dual01} then assures that the path-wise optimizer
is the only one.
We define
\begin{equation*}
J_1(\Delta) = \int_0^T \brak{ |\Delta_u|^2 - q_u \int_0^u\Delta_s ds} du \enspace\quad\mbox{and}\enspace\quad
J_2(\varGamma) = \int_0^T \brak{|\varGamma_u|^2 - q_u \int_0^u q_s\varGamma_s ds} du 
\end{equation*}  
and observe that it is sufficient to elaborate how to obtain conditions for a minimizer of $J_1$, as the functional $J_2$ is of a
similar structure.
Introducing $X(u):=\int_0^u\Delta_s ds$ we obtain $X'(u):=\frac{d}{du}X(u) = \Delta_u$ and
\begin{equation*}
J_1(\Delta) = \tilde J_1(X) = \int_0^T L(u,X(u),X'(u)) du 
\end{equation*}
where $L(u,a,b) = |b|^2 - q_ua$.
If $X$ is a local minimum of $\tilde J_1$, then $\tilde J_1(X)\le \tilde J_1(X+\varepsilon \eta)$ for sufficiently small $\varepsilon>0$ and
all differentiable $\eta\in C([0,T],\R)$ the derivatives of which are square integrable and which satisfy  $\eta(0)=\eta(T)=0$.
In particular, with $\phi(\varepsilon):= \tilde J_1(X+\varepsilon\eta)$, 
it has to hold that $\frac{d}{d\varepsilon}\phi(\varepsilon)_{|_{\varepsilon=0}}=0$.
Using the specific form of $L$ we get
\begin{align*}
\frac{d}{d\varepsilon}\phi(\varepsilon)_{|_{\varepsilon=0}} &= \lim_{h\to 0} \frac{1}{h}\int_0^T
\edg{ L\big(u,X(u)+h\eta(u),X'(u)+h\eta'(u)\big)- L\big(u,X(u),X'(u)\big) }du\\
&= \lim_{h\to 0} \int_0^T \edg{ -q_u\eta(u)+2X'(u)\eta'(u) +h(\eta'(u))^2 }du\,.
\end{align*}
Having assumed $\eta'$ to be square integrable allows us to exchange limit and integration, yielding
\begin{equation}\label{eq_intbyparts}
0 = \frac{d}{d\varepsilon}\phi(\varepsilon)_{|_{\varepsilon=0}} =  \int_0^T \edg{ -q_u\eta(u)+2\Delta_u\eta'(u)}du\,.  
\end{equation}
Using integration by parts we obtain
\begin{equation*}
-\int_0^T q_u \eta(u)du = 
\left.\brak{\int_0^{u} -q_sds}\eta(u) \right|^T_0 \enspace-\enspace
\int_0^T \brak{\int_0^{u} -q_sds} \eta'(u) du\,. 
\end{equation*}
The first term on the right-hand side above vanishes and so, by plugging this back into \eqref{eq_intbyparts}  
we end up with
\begin{equation}\label{eq_EL02}
\int_0^T \brak{2\Delta_u + \int_0^u q_s ds }\eta'(u)du = 0 \,.
\end{equation}
Let us next introduce the constant $c:= \frac{1}{T} \int_0^T (2\Delta_u + \int_0^{u} q_sds)du$, of course depending on $\omega$,
and observe that, using $\int_0^T\eta'(u)du=0$, Equation \eqref{eq_EL02} may be rewritten as
\begin{equation}\label{eq_EL03}
\int_0^T \brak{2\Delta_u + \int_0^u q_s ds  -c}\eta'(u)du = 0\,. 
\end{equation}
Moreover, the function
\begin{equation*}
\bar\eta(t) := \int_0^t \brak{2\Delta_u + \int_0^{u} q_sds -c} du
\end{equation*}
satisfies $\bar\eta(0)=\bar\eta(T)=0$ by construction as well as $\bar\eta'(u) = 2\Delta_u + \int_0^{u} q_sds -c$,
which is square integrable for almost all $\omega\in\Omega$, since $\Delta$ and $q$ are square integrable\footnote{More precisely, it holds
$P(\int_0^T|q_u|^2du<\infty)=P(\int_0^T|\Delta_u|^2du<\infty)=1$.}.
Hence, \eqref{eq_EL03} applied to our particular function $\bar\eta$ yields
\begin{equation*}
\int_0^T \brak{2\Delta_u + \int_0^u q_s ds - c}^2du = 0
\end{equation*}
and we deduce that 
\begin{equation}\label{eq_EL04}
2\Delta_u + \int_0^u q_s ds  = \frac{1}{T}  \int_0^T \brak{2\Delta_r + \int_0^r q_s ds }dr \qquad\mbox{for almost all $u\in[0,T]$}\,.
\end{equation}
Specifically, \eqref{eq_EL04} shows that, for almost all $\omega\in\Omega$, 
there exists a set $\mathcal I(\omega)\subseteq [0,T]$ with Lebesgue measure $T$
such that, for all $u\in \mathcal I(\omega)$, 
\begin{equation}\label{eq_EL_Pdt}
2\Delta_u + \int_0^u q_s ds   = M \,,
\end{equation}
where $M$ of course depends on $\omega\in\Omega$ and is thus a random variable.
This in turn implies, for $dt$-almost all $u\in[0,T]$, the existence of $\Omega_u\subseteq\Omega$ with $P(\Omega_u)=1$
such that \eqref{eq_EL_Pdt} holds for all $\omega\in\Omega_u$.
In particular, on $\Omega_u$ the above $M$ equals an $\mathcal F_u$-measurable random variable.
We choose a sequence $(u_n)\subset [0,T]$ with $\lim_n u_n = 0$ and hence obtain that
on $\bar\Omega:= \bigcap_n\Omega_{u_n}$, where $P(\bar\Omega)=Q(\bar\Omega)=1$, $M$ equals an $\bigcap_n\mathcal F_{u_n}$-measurable random
variable and is thus $\mathcal F_0$-measurable, that means it is a constant on $\bar\Omega$, by the right-continuity of our filtration.
Since modifying our optimizer on a $Q$-nullset does not alter the value of the functional to be optimized, we have shown \eqref{eq_opt_delta}
by putting $c_1:=\frac{M}{2}$.

As to the case of our optimal $\varGamma^Q$, assume first that $q_u\neq 0$ for all $u\in[0,T]$ and observe that, with $Y(u):=\int_0^u q_s\varGamma_s ds$, 
we obtain $J_2(\varGamma) = \tilde J_2(Y) = \int_0^T K(u,Y(u),Y'(u)) du $
where we set $K(u,a,b) = (1/q_u)^2 b^2 - q_ua$.
Thus, an argumentation identical to that above would yield \eqref{eq_opt_gamma} in that case.
Furthermore, it holds that $\lim_{q_u\to 0}\varGamma^Q_u=0$, a value that is consistent 
with the ``pointwise''  minimization consideration that $\argmin_{\varGamma_u}\{|\varGamma_u|^2 - q_u\int_{[0,u)}q_s\varGamma_sds\}|_{q_u=0}=0$
and hence justifies expression \eqref{eq_opt_gamma}. 
\end{proof}
For $\xi\in L^1$ such that $\mathcal A(\xi,g,z)\neq\emptyset$, the following theorem states that, given the existence of an equivalent 
probability measure $\hat Q\in\mathcal Q$ such that
the $\sup$ in \eqref{eq_duality_03b} is attained, the BSDE with generator $g$ and terminal condition $\xi$ admits a solution
under constraints.
Conditions guaranteeing that the rather technical assumptions of the ensuing theorem are satisfied
are subject to further research.
\begin{theorem}
Assume that, for $\xi\in L^1$ with  $\mathcal A(\xi,g,z)\neq\emptyset$, there exists a $\hat Q\in\mathcal Q$ 
with $\frac{dQ}{dP}\in L^\infty_b$ such that 
$\mathcal E^g_0(\xi) = E_{\hat Q}[\xi] - \mathcal E_0^*(\hat Q)$. 
Then there exists a solution $(Y,Z)\in\mathcal A(\xi,g,z)$ of the BSDE with parameters $(\xi,g)$.
\end{theorem}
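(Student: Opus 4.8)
The plan is to start from the minimal supersolution of $\xi$ and use the attainment hypothesis to force its associated increasing process to vanish. By Theorem~\ref{prop1} (applicable since $\mathcal A(\xi,g,z)\neq\emptyset$ and $\xi^-\in L^1$) there is a minimal supersolution $(\bar Y,\bar Z)\in\mathcal A(\xi,g,z)$ with $\bar Y_0=\mathcal E^g_0(\xi)$, and by Lemma~\ref{lemma1} its value process admits the canonical decomposition $\bar Y=\mathcal E^g_0(\xi)+\int\bar Z\,dW-A$ with $A$ increasing, predictable and $A_0=0$; here $\bar Z=z+\int\bar\Delta\,du+\int\bar\varGamma\,dW$, and $\int\bar Z\,dW$ is a true martingale since $\bar Z\in\mathcal L^2$. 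Following the construction in the proof of Lemma~\ref{lem_dual00}, I would set
\begin{equation*}
Y^1_t:=\mathcal E^g_0(\xi)-\int_0^t g_u(\bar Z_u,\bar\Delta_u,\bar\varGamma_u)\,du+\int_0^t\bar Z_u\,dW_u=\bar Y_t+A_t\,,
\end{equation*}
so that $Y^1_T\ge\bar Y_T\ge\xi$, the pair $(Y^1,\bar Z)$ solves $\mathrm{BSDE}(Y^1_T,g)$ with $Y^1_0=\mathcal E^g_0(Y^1_T)=\mathcal E^g_0(\xi)$, and consequently $(\bar\Delta,\bar\varGamma)\in\Pi$ in the sense of \eqref{eq_Pi}.

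Next I would feed $(\bar\Delta,\bar\varGamma)$ into the dual representation. Since $(\bar\Delta,\bar\varGamma)\in\Pi$ is admissible in the supremum \eqref{eq_duality_01}, it holds
\begin{equation*}
\mathcal E^*_0(\hat Q)\ge E_{\hat Q}\edg{-\int_0^T g_u(\bar Z_u,\bar\Delta_u,\bar\varGamma_u)\,du+\int_0^T\bar Z_u\,dW_u}=E_{\hat Q}[Y^1_T]-\mathcal E^g_0(\xi)\,,
\end{equation*}
where the last equality is the definition of $Y^1$. On the other hand, the attainment hypothesis reads $\mathcal E^*_0(\hat Q)=E_{\hat Q}[\xi]-\mathcal E^g_0(\xi)$. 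Combining the two gives $E_{\hat Q}[\xi]\ge E_{\hat Q}[Y^1_T]$. As $Y^1_T\ge\xi$ and $\hat Q\sim P$ (indeed $\hat Q\in\mathcal Q$), the reverse inequality $E_{\hat Q}[Y^1_T]\ge E_{\hat Q}[\xi]$ also holds, whence $E_{\hat Q}[Y^1_T-\xi]=0$ with non-negative integrand; equivalence of $\hat Q$ and $P$ then yields $Y^1_T=\xi$ $P$-almost surely. All expectations here are finite because $\xi,Y^1_T\in L^1(P)$ and the density is bounded.

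Finally, from $Y^1_T=\bar Y_T+A_T$ and $Y^1_T=\xi\le\bar Y_T$ I would conclude $A_T\le0$; since $A$ is increasing with $A_0=0$ this forces $A\equiv0$, hence $\bar Y=Y^1$, and $(\bar Y,\bar Z)$ satisfies \eqref{eq03} with equalities and $\bar Y_T=\xi$, i.e.\ it is a solution of $\mathrm{BSDE}(\xi,g)$ belonging to $\mathcal A(\xi,g,z)$. The crux of the argument, and the step I expect to require the most care, is the identification $(\bar\Delta,\bar\varGamma)\in\Pi$ together with the well-definedness of the $\hat Q$-expectation of $\int\bar Z\,dW$; both are already secured within the proof of Lemma~\ref{lem_dual00} and the derivation of \eqref{eq_duality_01}, so the only genuinely new work is the short squeezing argument turning the expectation identity into the pointwise equality $Y^1_T=\xi$ and the subsequent collapse of $A$.
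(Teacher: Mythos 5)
Your proof is correct, and while its skeleton (minimal supersolution $\rightarrow$ drift-augmented process $Y^1$ $\rightarrow$ expectation identity under $\hat Q$ $\rightarrow$ pointwise squeeze) matches the paper's, it differs in one substantive way: you never use Proposition \ref{lem_dual01}. The paper first replaces $\mathcal E^*_0(\hat Q)$ by its attained value $E_{\hat Q}\edg{-\int_0^T g_u(Z^{\hat Q}_u,\Delta^{\hat Q}_u,\varGamma^{\hat Q}_u)du+\int_0^T Z^{\hat Q}_u dW_u}$, then argues that the minimal supersolution's control must also attain the dual supremum, and only then reaches the identity $E_{\hat Q}\edg{\mathcal E^g_0(\xi)-\int_0^T g_u\,du+\int_0^T Z_u\,dW_u-\xi}=0$. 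You instead use only the elementary direction of duality, $\mathcal E^*_0(\hat Q)\ge E_{\hat Q}[Y^1_T]-\mathcal E^g_0(Y^1_T)$, which is legitimate because the construction in Lemma \ref{lem_dual00} certifies $Y^1_T\in L^1$, $\mathcal E^g_0(Y^1_T)=\mathcal E^g_0(\xi)$ and $(\bar\Delta,\bar\varGamma)\in\Pi$; combined with the attainment hypothesis this gives $E_{\hat Q}[\xi]\ge E_{\hat Q}[Y^1_T]$, and since $Y^1_T\ge\xi$ and $\hat Q\sim P$ you conclude $Y^1_T=\xi$. Note your final identity $E_{\hat Q}[Y^1_T-\xi]=0$ is literally the same as the paper's, since its integrand $\mathcal E^g_0(\xi)-\int_0^T g_u\,du+\int_0^T Z_u\,dW_u-\xi$ is exactly your $Y^1_T-\xi$. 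Your route is therefore more economical: it avoids the existence of a dual maximizer and hence the BMO and compactness machinery behind Proposition \ref{lem_dual01}; what the paper's longer detour buys is the additional structural fact that the solution's control attains $\mathcal E^*_0(\hat Q)$, i.e.\ an explicit primal--dual (subgradient) relation.

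Two small repairs. First, the displayed identity $Y^1_t=\bar Y_t+A_t$ is wrong as written: the compensator $A$ of Lemma \ref{lemma1} already absorbs the drift, so the correct relation is $Y^1_t=\bar Y_t+K_t$ with $K_t:=A_t-\int_0^t g_u(\bar Z_u,\bar\Delta_u,\bar\varGamma_u)\,du$, the increasing process from the paper's footnote to \eqref{eq03}; your concluding step should collapse $K$ (increasing, $K_0=0$, $K_T\le0$, hence $K\equiv0$), not $A$. Second, that concluding step is actually dispensable: once $Y^1_T=\xi$ is established, the pair $(Y^1,\bar Z)$ already satisfies \eqref{eq03} with equalities and terminal value $\xi$, which is precisely the solution the paper exhibits; the collapse of $K$ only adds the marginally stronger statement that the minimal supersolution $(\bar Y,\bar Z)$ itself coincides with it.
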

\begin{proof}
Starting with \eqref{eq_duality_03b} in combination with Proposition \ref{lem_dual01}, it holds
\begin{equation}\label{eq_duality_04}
\mathcal E^g_0(\xi) = E_{\hat Q}[\xi] - \mathcal E_0^*(\hat Q)
= E_{\hat Q}\left[\xi+ \int_0^T g_u(Z^{\hat Q}_u,\Delta^{\hat Q}_u,\varGamma^{\hat Q}_u) du  - \int_0^T Z^{\hat Q}_udW_u \right]\,.
\end{equation}
We recall that $\mathcal A(\xi,g,z)\neq\emptyset$ and $\xi\in L^1$.
Hence, by Theorem \ref{prop1} there exists $(\Delta,\varGamma,Z)$ such that
\begin{equation*}
E_{\hat Q}\left[\xi+ \int_0^T g_u(Z^{\hat Q}_u,\Delta^{\hat Q}_u,\varGamma^{\hat Q}_u) du  - \int_0^T Z^{\hat Q}_udW_u \right] 
- \int_0^T g_u(Z_u,\Delta_u,\varGamma_u) du  + \int_0^T Z_udW_u \ge \xi
\end{equation*}
holds true.
Taking expectation under $\hat Q$ on both sides of the inequality above yields
\begin{multline*}
E_{\hat Q}\left[- \int_0^T g_u(Z^{\hat Q}_u,\Delta^{\hat Q}_u,\varGamma^{\hat Q}_u) du  + \int_0^T Z^{\hat Q}_udW_u \right]
\\ \le E_{\hat Q}\left[- \int_0^T g_u(Z_u,\Delta_u,\varGamma_u) du  + \int_0^T Z_udW_u \right]\,.
\end{multline*}
However, the expression on the left-hand side is maximal for $(Z^{\hat Q},\Delta^{\hat Q},\varGamma^{\hat Q})$ 
by means of Proposition \ref{lem_dual01} 
and thus equality has to hold.
Hence, it follows that $\mathcal E^*_0 (\hat Q) = E_{\hat Q}[ -\int_0^T g_u(Z_u,\Delta_u,$ $\varGamma_u) du  + \int_0^T Z_udW_u]$.
By plugging this back into \eqref{eq_duality_04} we obtain
\begin{equation*}
\mathcal E^g_0(\xi) = E_{\hat Q}[\xi]+ E_{\hat Q}[ \int_0^T g_u(Z_u,\Delta_u,\varGamma_u) du  - \int_0^T Z_udW_u]
\end{equation*}
which is equivalent to 
\begin{equation*}
E_{\hat Q}\left[ \mathcal E^g_0(\xi) - \int_0^T g_u(Z_u,\Delta_u,\varGamma_u) du  + \int_0^T Z_udW_u - \xi \right] = 0\,.
\end{equation*}
Since the expression within the expectation is $P$- and thereby also $\hat Q$-almost surely positive, we finally conclude
that
\begin{equation*}
\mathcal E^g_0(\xi) - \int_0^T g_u(Z_u,\Delta_u,\varGamma_u) du  + \int_0^T Z_udW_u = \xi\,,
\end{equation*}
and thus $(\mathcal E^g_0(\xi) - \int_0^\cdot g(Z,\Delta,\varGamma) du  + \int_0^\cdot ZdW,Z)$ 
constitutes a solution of the BSDE with parameters $(\xi,g)$.
\end{proof}

%
\bibliographystyle{abbrvnat}
\bibliography{bibliography}
\end{document}